\theoremstyle{plain}
    \newtheorem{lemma}{Lemma}[section]
    \newtheorem{theorem}[lemma]{Theorem}
\theoremstyle{remark}
\newcommand{\byref}[1]{\overset{\textup{\ref{#1}}}{=}}
\title{Varieties of Lazy Magmas Characterized by Forbidden Substructure Theorems}
\author[1,2]{Jo\~ao Ara\'ujo\thanks{\texttt{jaraujo@ptmat.fc.ul.pt}}}
\author[6]{Fernando Maia Ferreira\thanks{\texttt{fernando.ferreira@cedis.pt}}}
\author[1,5]{Michael Kinyon\thanks{\texttt{mkinyon@du.edu}}}
\affil[1]{\small{Centro de Matem\'{a}tica e Aplica\c{c}\~{o}es, Faculdade de Ci\^{e}ncias e Tecnologia,
    Universidade Nova de Lisboa, Campus da Caparica 2829-516, Caparica, PT}}
\affil[5]{\small{Department of Mathematics, University of Denver, Denver CO 80208, USA}}
\affil[6]{\small{Department of Mathematics, Universidade Aberta, Portugal}}
\date{}
\begin{document}
\maketitle
\tableofcontents
\begin{abstract}
A magma (or groupoid) is a set with a binary operation $(A,f)$. Roughly speaking, a magma is said to be lazy if compositions  such as $f(x,f(f(y,z),u))$ depend on at most two variables. Recently,   Kaprinai, Machida and Waldhauser described the lattice of all the varieties of lazy groupoids.

A forbidden structure theorem is one that charcaterizes a smaller class $A$ inside a larger class $B$ as all the elements in $B$ that avoid some substructures. For example, a lattice is distributive (smaller class $A$) if and only if it is a lattice (larger class $B$) and avoids the pentagon and the diamond.

In this paper we provide a characterization of all pairs of lazy groupoid varieties $A\le B$ by forbidden substructure theorems. Some of the results are straightforward, but some other are very involved. All of these results and proofs were found using a computational tool that proves theorems of this type (for many different classes of relational algebras) and that we make available to every mathematician.  

\vskip 2mm

\noindent\emph{$2010$ Mathematics Subject Classification\/}. TBA.
\end{abstract}

\section{Introduction}
Let $\mathcal{C}$ be a class of mathematical structures of some type (with well defined concepts of substructure and isomorphism) and let $\mathcal{F}$ be a subset of  $\mathcal{C}$. A structure $S$ in $\mathcal{C}$ is said to \emph{avoid} the structures in $\mathcal{F}$ if no substructure of $S$ is isomorphic to a structure in $\mathcal{F}$.
Let $\llbracket \mathcal{C}\mid \mathcal{F} \rrbracket$ denote the class of all structures $S$ in $\mathcal{C}$ which avoid the structures in $\mathcal{F}$. We consider $\mathcal{F}$ to be a class of \emph{forbidden substructures} for $\llbracket \mathcal{C}\mid \mathcal{F} \rrbracket$.

A \emph{forbidden substructure theorem} characterizes a class $\mathcal{C}_1\subseteq \mathcal{C}$ as
\[
\mathcal{C}_1=\llbracket \mathcal{C}\mid \mathcal{F} \rrbracket\,,
\]
for some family of structures  $\mathcal{F}$. Such theorems can be read as saying that \emph{a structure $S$ belongs to $\mathcal{C}_1$ if and only if $S$ belongs to $\mathcal{C}$ and avoids the structures in $\mathcal{F}$}.

Probably, the paramount forbidden substructure theorems are, for graphs,
\[
\mathcal{B}=\llbracket \mathcal{G}\mid \mathcal{O}\rrbracket,
\]where $\mathcal{B}$ denotes the bipartite graphs, $\mathcal{G}$ denotes all the graphs, and $\mathcal{O}$ denotes the cycles of odd length;   and, for lattices,
\[
\mathcal{D}=\llbracket \mathcal{L}\mid \{p,d\}\rrbracket,
\]where $\mathcal{D}$ denotes the distributive lattices, $\mathcal{L}$ denotes all the lattices, and $\{p,d\}$ is the set containing the pentagon $p$ and the diamond $d$.

Both in graph theory and in universal algebra there are many more forbidden substructures theorems. In this paper, we will consider only \emph{magmas} (also known as \emph{groupoids}), that is, sets with a single binary operation and with the usual notions of submagma and isomorphism. Although some of our magmas will be semigroups, this is not generally the case, and so to improve readability, we will use the convention that juxtaposition binds more tightly than the explicit operation. Thus, for instance, for a binary operation $\cdot$, $x(y\cdot uv)$ is the same as $x\cdot (y\cdot (u\cdot v))$.

In a magma $(M,\cdot)$, it might happen that the result of a composition such as $x(y\cdot uv)$ depends only on a proper subset of the variables that occur in it. For example, if a binary operation satisfies $xx=x$ and $x\cdot yz=xy\cdot z=xz$ (these are the \emph{rectangular bands}), then any composition depends only on two variables as we have $x_1\ldots x_n=x_1x_n$. Following \cite{Machida}, a \emph{lazy magma} is, roughly speaking, a magma $(M,\cdot)$ such that repeated compositions of $\cdot$ yield nothing new (for a more formal definition, we refer the reader to \cite{Kaprinai,Machida,Machida2}). The lattice of all varieties of lazy magmas was described in \cite{Kaprinai} and is depicted in Figure \ref{fig1}.

  \begin{figure}[h]
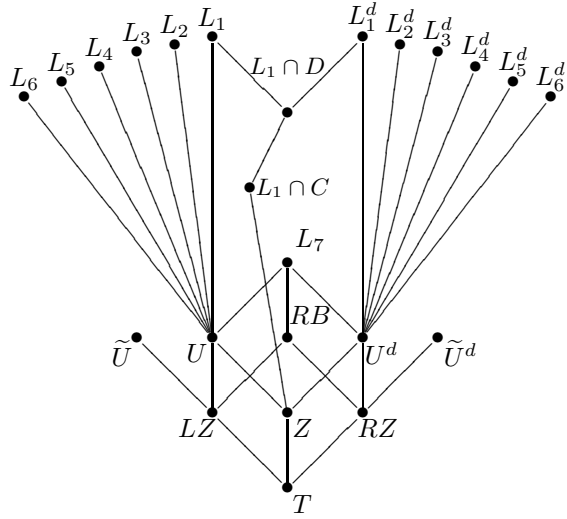

\[
\xy
(0,0)*{\bullet}="T";(2,-2)*{\mbox{$T$}};
(-10,10)*{\bullet}="LZ";(-12,8)*{\mbox{$LZ$}};
(10,10)*{\bullet}="RZ";(12,8)*{\mbox{$RZ$}};
(0,10)*{\bullet}="Z"; (2,8)*{\mbox{$Z$}};
(0,20)*{\bullet}="RB"; (3,23)*{\mbox{$RB$}};
(-10,20)*{\bullet}="U";(-12,18)*{\mbox{$U$}};
(-20,20)*{\bullet}="Utilde";(-22,18)*{\mbox{$\widetilde{U}$}};
(10,20)*{\bullet}="Ud"; (12.5,18)*{\mbox{$U^d$}};
(0,30)*{\bullet}="L7"; (3,33)*{\mbox{$L_7$}};
(-5,40)*{\bullet}="L1C"; (0.7,40)*{\small{\mbox{$L_1\cap C$}}};
(0,50)*{\bullet}="L1D"; (0,56)*{\small{\mbox{$L_1\cap D$}}};
(-10,60)*{\bullet}="L1"; (-10,62.5)*{\mbox{$L_1$}};
(-15,59)*{\bullet}="L2"; (-15,61.5)*{\mbox{$L_2$}};
(-20,58)*{\bullet}="L3"; (-20,60.5)*{\mbox{$L_3$}};
(-25,56)*{\bullet}="L4"; (-25,58.5)*{\mbox{$L_4$}};
(-30,54)*{\bullet}="L5"; (-30,56.5)*{\mbox{$L_5$}};
(-35,52)*{\bullet}="L6"; (-35,54.5)*{\mbox{$L_6$}};
{\ar@{-} "T";"LZ"};
{\ar@{-} "T";"Z"};
{\ar@{-} "T";"RZ"};
{\ar@{-} "RZ";"Ud"};
{\ar@{-} "RZ";"RB"};
{\ar@{-} "LZ";"RB"};
{\ar@{-} "LZ";"U"};
{\ar@{-} "LZ";"Utilde"};
{\ar@{-} "Z";"Ud"};
{\ar@{-} "Z";"U"};
{\ar@{-} "Z";"L1C"};
{\ar@{-} "RB";"L7"};
{\ar@{-} "Ud";"L7"};
{\ar@{-} "L1C";"L1D"};
{\ar@{-} "L1";"L1D"};
{\ar@{-} "U";"L1"};
{\ar@{-} "U";"L7"};
{\ar@{-} "U";"L1"};
{\ar@{-} "U";"L2"};
{\ar@{-} "U";"L3"};
{\ar@{-} "U";"L4"};
{\ar@{-} "U";"L5"};
{\ar@{-} "U";"L6"};
(10,60)*{\bullet}="L1d"; (10,62.8)*{\mbox{$L^d_1$}};
(15,59)*{\bullet}="L2d"; (15,61.8)*{\mbox{$L^d_2$}};
(20,58)*{\bullet}="L3d"; (20,60.8)*{\mbox{$L^d_3$}};
(25,56)*{\bullet}="L4d"; (25,58.8)*{\mbox{$L^d_4$}};
(30,54)*{\bullet}="L5d"; (30,56.8)*{\mbox{$L^d_5$}};
(35,52)*{\bullet}="L6d"; (35,54.8)*{\mbox{$L^d_6$}};
(20,20)*{\bullet}="Utilded";(23,18)*{\mbox{$\widetilde{U}^d$}};
{\ar@{-} "RZ";"Utilded"};
{\ar@{-} "Ud";"L1d"};
{\ar@{-} "Ud";"L1d"};
{\ar@{-} "Ud";"L2d"};
{\ar@{-} "Ud";"L3d"};
{\ar@{-} "Ud";"L4d"};
{\ar@{-} "Ud";"L5d"};
{\ar@{-} "Ud";"L6d"};
{\ar@{-} "L1d";"L1D"};
\endxy
\]
\caption{Lattice of varieties of lazy magmas}\label{fig1}
\end{figure}
\label{sint} where each variety is defined by the following sets of identities (omitting duals):
\begin{align*}
    xy=xz,\quad xy\cdot z=xy     \label{U}\tag*{(U)} \\
    xy=xz,\quad xy\cdot z=x      \label{tildeU}\tag*{($\widetilde{\text{U}}$)} \\
    xy\cdot z=xx,\quad x\cdot yz=xx  \label{L1}\tag*{(L$_1$)} \\
    xy\cdot z=xx,\quad x\cdot yz=xy  \label{L2}\tag*{(L$_2$)} \\
    xy\cdot z=xy,\quad x\cdot yz=xx  \label{L3}\tag*{(L$_3$)} \\
    xy\cdot z=xz,\quad x\cdot yz=xx  \label{L4}\tag*{(L$_4$)} \\
    xy\cdot z=xz,\quad x\cdot yz=xx  \label{L5}\tag*{(L$_5$)} \\
    xy\cdot z=xz,\quad x\cdot yz=xy  \label{L6}\tag*{(L$_6$)} \\
    xy\cdot z=xz,\quad x\cdot yz=xz  \label{L7}\tag*{(L$_7$)} \\
    xy=x                     \label{LZ}\tag*{(LZ)} \\
    yx=x                     \label{RZ}\tag*{(RZ)} \\
    xy=zu                    \label{Z}\tag*{(Z)} \\
    x=y                      \label{T}\tag*{(T)} \\
    x^2=x                    \label{I}\tag*{(I)} \\
    xx=yy                    \label{D}\tag*{(D)} \\
    xy=yx                    \label{C}\tag*{(C)} \\
    xy\cdot z=x\cdot yz,\quad x\cdot yx=x  \label{RB}\tag*{(RB)}
\end{align*}

The goal of this paper is the following. Let $A$ and $B$ be two varieties in the list above such that $A$ is covered by $B$. Then we provide a forbidden substructure theorem $A=\llbracket B\mid F\rrbracket$.

The main theorem of this paper is the following. (To avoid duplicating nearly 30 Cayley tables, we will give them explicitly later. The ones used repeatedly will be shown
at the beginning of {\S}\ref{Sec:proofs} and the rest will occur in the statements of the particular theorems.)

\begin{theorem}\label{main} For the varieties above, we have

  \begin{figure}[H]
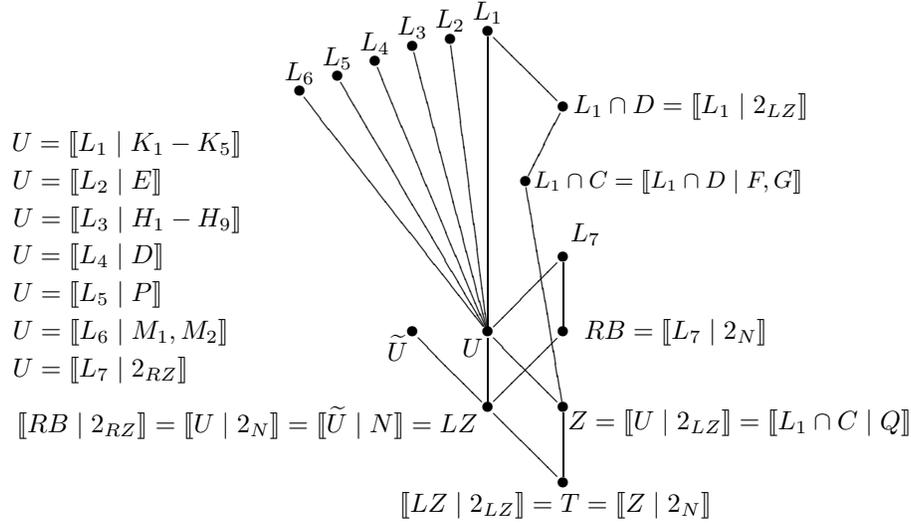

\[
\xy
(0,0)*{\bullet}="T";(-1,-3)*{\mbox{$\llbracket LZ\mid 2_{LZ} \rrbracket=T=\llbracket Z\mid 2_N \rrbracket$}};
(-10,10)*{\bullet}="LZ";(-42,8)*{\mbox{$\llbracket RB\mid  2_{RZ}\rrbracket=\llbracket U\mid 2_N\rrbracket=\llbracket \widetilde{U}\mid N \rrbracket=LZ$}};
(-58,45)*{\mbox{$U=\llbracket L_1\mid  K_1-K_5\rrbracket$}};
(-63.2,40)*{\mbox{$U=\llbracket L_2\mid  E\rrbracket$}};
(-57.9,35)*{\mbox{$U=\llbracket L_3\mid  H_1 - H_9 \rrbracket$}};
(-63.1,30)*{\mbox{$U=\llbracket L_4\mid  D\rrbracket$}};
(-63.2,25)*{\mbox{$U=\llbracket L_5\mid  P\rrbracket$}};
(-58.8,20)*{\mbox{$U=\llbracket L_6\mid  M_1 , M_2 \rrbracket$}};
(-61.5,15)*{\mbox{$U=\llbracket L_7\mid  2_{RZ}\rrbracket$}};
(0,10)*{\bullet}="Z"; (23.5,8)*{\mbox{$Z=\llbracket U\mid 2_{LZ} \rrbracket=\llbracket L_1\cap C \mid Q\rrbracket$}};
(0,20)*{\bullet}="RB"; (15,20)*{\mbox{$RB=\llbracket L_7\mid 2_{N} \rrbracket$}};
(-10,20)*{\bullet}="U";(-12,18)*{\mbox{$U$}};
(-20,20)*{\bullet}="Utilde";(-22,18)*{\mbox{$\widetilde{U}$}};
(0,30)*{\bullet}="L7"; (3,33)*{\mbox{$L_7$}};
(-5,40)*{\bullet}="L1C"; (14,40)*{\small{\mbox{$L_1\cap C=\llbracket L_1\cap D \mid F,G\rrbracket$}}};
(0,50)*{\bullet}="L1D"; (17,50)*{{\mbox{$L_1\cap D=\llbracket L_1\mid 2_{LZ}\rrbracket$}}};
(-10,60)*{\bullet}="L1"; (-10,62.5)*{\mbox{$L_1$}};
(-15,59)*{\bullet}="L2"; (-15,61.5)*{\mbox{$L_2$}};
(-20,58)*{\bullet}="L3"; (-20,60.5)*{\mbox{$L_3$}};
(-25,56)*{\bullet}="L4"; (-25,58.5)*{\mbox{$L_4$}};
(-30,54)*{\bullet}="L5"; (-30,56.5)*{\mbox{$L_5$}};
(-35,52)*{\bullet}="L6"; (-35,54.5)*{\mbox{$L_6$}};
{\ar@{-} "T";"LZ"};
{\ar@{-} "T";"Z"};
{\ar@{-} "LZ";"RB"};
{\ar@{-} "LZ";"U"};
{\ar@{-} "LZ";"Utilde"};
{\ar@{-} "Z";"U"};
{\ar@{-} "Z";"L1C"};
{\ar@{-} "RB";"L7"};
{\ar@{-} "L1C";"L1D"};
{\ar@{-} "L1";"L1D"};
{\ar@{-} "U";"L1"};
{\ar@{-} "U";"L7"};
{\ar@{-} "U";"L1"};
{\ar@{-} "U";"L2"};
{\ar@{-} "U";"L3"};
{\ar@{-} "U";"L4"};
{\ar@{-} "U";"L5"};
{\ar@{-} "U";"L6"};
\endxy
\]
\caption{Lattice of varieties of lazy magmas characterized by forbidden substructures}\label{fig2}
\end{figure}

The corresponding results for the dual varieties can be obtained using the dual models (that is, transposing their Cayley Tables). For example, $U^d=\llbracket L^d_5\mid  P^d\rrbracket$.
\end{theorem}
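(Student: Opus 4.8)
\emph{Proof strategy.} The plan is to prove each equality $A=\llbracket B\mid F\rrbracket$ of Figure~\ref{fig2} on its own, and for each one to establish the two inclusions
\[
A\subseteq\llbracket B\mid F\rrbracket
\qquad\text{and}\qquad
\llbracket B\mid F\rrbracket\subseteq A
\]
by quite different means. All the pairs occurring in Figure~\ref{fig2} are covering pairs in the lattice of Figure~\ref{fig1}; since $\llbracket B\mid F\cup G\rrbracket=\llbracket\,\llbracket B\mid G\rrbracket\mid F\rrbracket$, a forbidden-substructure theorem for an arbitrary pair $A\le B$ is then obtained by uniting the forbidden sets along any maximal chain from $A$ to $B$, so it is enough to handle the covers. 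The map sending a magma to its transpose induces an order-automorphism of the lattice of lazy magma varieties, carrying each variety to its dual and each cover to a cover, so every proved statement yields its dual at once, and we may ignore the dual half of the lattice exactly as Figure~\ref{fig2} does.

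For the inclusion $A\subseteq\llbracket B\mid F\rrbracket$, recall that varieties are closed under submagmas; hence it suffices to check that every member of $F$ is a magma lying in $B$ but failing at least one of the identities that define $A$ inside $B$, for then no magma of $A$ can contain a submagma isomorphic to a member of $F$. This is a finite, purely computational verification, read off from the (small) Cayley tables of the forbidden structures. (In the write-up the tables of the repeatedly used structures $2_{LZ}$, $2_{RZ}$, $2_N$, $N$, $\dots$ will be collected at the start of {\S}\ref{Sec:proofs} and the ad hoc ones $K_1$--$K_5$, $E$, $H_1$--$H_9$, $D$, $P$, $M_1$, $M_2$, $F$, $G$, $Q$, $\dots$ displayed in the relevant cases.)

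For the reverse inclusion I would argue by contraposition. Suppose $M\in B$ but $M\notin A$; then $M$ violates one of the finitely many identities $u\approx v$ axiomatizing $A$ over $B$, so there are witnesses $a_1,\dots,a_k\in M$ with $u(\bar a)\ne v(\bar a)$. Because $B$ is a variety of lazy magmas, every term depends, modulo the identities of $B$, on at most two of its variables; consequently every element of the submagma $\langle a_1,\dots,a_k\rangle$ is of the form $a_i$ or $a_ia_j$, so this submagma is finite of bounded size and realizes one of only finitely many isomorphism types, each constrained both by the identities of $B$ and by $u(\bar a)\ne v(\bar a)$. It then remains to run through this finite list of local configurations and, in each, point to a subset closed under the operation and isomorphic to a member of $F$; this gives $\llbracket B\mid F\rrbracket\subseteq A$. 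For the covers whose forbidden set is a single two-element magma — such as $\llbracket L_7\mid 2_{RZ}\rrbracket=U$, $\llbracket L_1\mid 2_{LZ}\rrbracket=L_1\cap D$, $\llbracket U\mid 2_{LZ}\rrbracket=Z$, $\llbracket \widetilde U\mid N\rrbracket=LZ$, and $\llbracket Z\mid 2_N\rrbracket=T$ — this case analysis is short and can be carried out directly from the identities. For the covers with a long forbidden list — $U=\llbracket L_1\mid K_1\text{--}K_5\rrbracket$, $U=\llbracket L_3\mid H_1\text{--}H_9\rrbracket$, $U=\llbracket L_2\mid E\rrbracket$, $L_1\cap C=\llbracket L_1\cap D\mid F,G\rrbracket$, and the like — the number of local configurations is large, and this is where the computer search is essential: it produces a sufficient forbidden set and certifies that the list of configurations to be inspected is complete.

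The step I expect to be the main obstacle is exactly this last one: proving that the chosen $F$ is \emph{exhaustive} for the involved covers. Writing down some forbidden structures is easy; the difficulty is guaranteeing that \emph{every} $M\in B\setminus A$ contains one of them as a submagma, i.e.\ that no shape of the failure of $u\approx v$ has been overlooked. Concretely one must (i) correctly bound the size of, and enumerate the isomorphism types of, $\langle a_1,\dots,a_k\rangle$ inside a lazy $B$-magma, (ii) decide for each type whether it lies in $A$, and (iii) for each type not in $A$, locate an embedded copy of some $S\in F$; an oversight in (i) or (iii) would break the theorem. The Cayley-table verifications needed for the first inclusion, and the bookkeeping of duality, are routine by comparison.
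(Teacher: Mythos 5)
Your strategy is essentially identical to the paper's: each covering pair is treated separately, the inclusion $A\subseteq\llbracket B\mid F\rrbracket$ is a finite Cayley-table check on the forbidden structures, and the reverse inclusion is proved by taking witnesses to the failed identity, observing that laziness forces the submagma they generate to consist of the generators and their pairwise products, and then enumerating the possible coincidence patterns among those elements to locate a copy of some member of $F$. The only thing missing is the execution of those case analyses (your steps (i)--(iii)), which is precisely the content of the individual theorems in \S\ref{Sec:proofs}, where for each cover a set $S(x,y)$ is exhibited together with its generic Cayley table and the coincidence cases are worked through by hand.
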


To find these results (the models, the conjectures and the proofs of the theorems) we used a computational tool that we produced and have made freely available at \cite{site}.
The tool is built on top of the automated theorem prover \textsc{Prover9} and the finite model builder \textsc{Mace4}, both developed by McCune \cite{Prover9}. The Cayley tables for the models we present in the theorems come originally from \textsc{Mace4}, and the original proofs of the theorems come from \textsc{Prover9}.

The proofs we present here have been ``humanized,'' which is to say that we carefully studied the \textsc{Prover9} proofs until we were able to extract the essential ideas.
The most dramatic simplification occurs in the case of $U=\llbracket L_3\mid \mathcal{F}\rrbracket$. The set $\mathcal{F}$ contains $9$ models, one of size $6$, four of size $5$, and four of size $4$. We originally prepared a more or less literal translation of the \textsc{Prover9} proof which ran to over seven pages and was based on rather subtle manipulations of equations. The present proof was based on realizing that what \textsc{Prover9} was revealing to us was that $2$-generated submagmas of lazy magmas are rather well-behaved. 
 
Section \ref{Sec:proofs} contains all the theorems needed to prove Theorem \ref{main}. Section \ref{tool} contains a general explanation of the computational tool we wrote to produce these forbidden substructure theorems (and many others).

\section{The Proofs}\label{Sec:proofs}

Throughout this section, all magmas will be assumed to be nonempty.

Certain forbidden submagmas will occur repeatedly in what follows, so we describe them here to avoid duplicating Cayley tables.

A \emph{left zero band} is a magma satisfying \ref{LZ}, that is, $xy=x$ for all $x,y$.
Left zero bands are clearly semigroups and also idempotent since $xx=x$.
Any nonempty subset of a left zero band is a subsemigroup which is also a left zero band.
It is easy to see that up to isomorphism, there is exactly one $2$-element left zero band, which we denote by $2_{LZ}$:
\[
\begin{array}{c|cc}
2_{LZ} & 0 & 1 \\
\hline
0 & 0 & 0 \\
1 & 1 & 1
\end{array}
\]

Dual remarks apply to \emph{right zero bands} which satisfy \ref{RZ}, that is, $xy=y$ for all $x,y$. We denote the $2$-element right zero band
by $2_{RZ}$:
\[
\begin{array}{c|cc}
2_{RZ} & 0 & 1 \\
\hline
0 & 0 & 1 \\
1 & 0 & 1
\end{array}
\]

A \emph{null semigroup} is a semigroup satisfying \ref{Z}, that is, $xy=zu$ for all $x,y,z,u$.
It is easy to see that up to isomorphism, there is exactly one $2$-element null semigroup, which we denote by $2_N$:
\[
\begin{array}{c|cc}
2_{N} & 0 & 1 \\
\hline
0 & 0 & 0 \\
1 & 0 & 0
\end{array}
\]

\subsection{$L_1\cap D$ characterized inside $L_1$}
Recall the relevant identities:
\begin{align*}
    xy\cdot z=xx,\quad x\cdot yz=xx  \tag*{(L$_1$)} \\
    xx=yy                            \tag*{(D)}
\end{align*}
Since $L_1$ is a variety of semigroups, we shall use associativity freely.

\begin{theorem}\label{Thm:L1_D_avoids_A}
A magma belongs to $L_1\cap D$ if and only if it belongs to $L_1$ and avoids the $2$-element left zero band $2_{LZ}$.
\end{theorem}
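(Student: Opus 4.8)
The statement asserts $L_1 \cap D = \llbracket L_1 \mid 2_{LZ}\rrbracket$, so I need both inclusions. For the easy direction, suppose $(M,\cdot)$ belongs to $L_1\cap D$. Then $xx = yy$ for all $x,y$, so there is a single "square" element $e := xx$, and every product $uv = u\cdot(xx\cdot x)$... more directly, from $xy\cdot z = xx = e$, taking $z$ to range shows every triple product equals $e$; but also for a $2$-element subset $\{a,b\}$ with $aa=bb=e$, if it were a copy of $2_{LZ}$ we would need $ab=a$ and $ba=b$ with $a\ne b$, yet $aa=e$ forces... I would argue that in $2_{LZ}$ we have $0\cdot 0 = 0 \ne 1 = 1\cdot 1$, so $2_{LZ}$ does not satisfy $D$; since $D$ is an identity, it is inherited by submagmas, hence no submagma of $M$ can be isomorphic to $2_{LZ}$. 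That handles $L_1\cap D \subseteq \llbracket L_1\mid 2_{LZ}\rrbracket$ with essentially no computation.

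**The substance is the reverse inclusion.** Assume $(M,\cdot)\in L_1$ avoids $2_{LZ}$; I must show $M\models D$, i.e. $aa = bb$ for all $a,b\in M$. The key is to understand $2$-generated (indeed, here it suffices to look at cleverly chosen small) submagmas. Using the $L_1$ identities $xy\cdot z = xx$ and $x\cdot yz = xx$ freely together with associativity, I would first derive normal-form facts: for any $u,v$, $uv\cdot uv = uu$ (apply $x\cdot yz=xx$ with the bracketing), and $uv$ is idempotent-like in controlled ways. In particular I expect to show that for each $a$, the element $e_a := aa$ satisfies $e_a e_a = e_a$ and $e_a x = e_a$, $x e_a = ?$ — enough structure that $\{a, e_a\}$ (when $a \ne e_a$) or a related two-element set behaves like a left-zero band unless $e_a = e_b$ always. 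Concretely: if $aa \ne bb$ for some $a,b$, I would try to exhibit inside $M$ a two-element submagma on which the operation is $xy = x$, contradicting avoidance of $2_{LZ}$. The natural candidates are subsets built from $a$, $b$, $aa$, $bb$, $ab$, $ba$; the $L_1$ identities collapse all longer products, so checking closure of a two-element set amounts to a finite case analysis on which of these coincide.

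**Expected main obstacle.** The delicate part is producing the forbidden $2_{LZ}$ from the mere inequality $aa\ne bb$: a priori the relevant two-element candidate set might fail to be closed, or might turn out to be a null semigroup or right-zero band instead. I anticipate needing a short lemma that in any magma satisfying $L_1$, the set $\{xx : x\in M\}$ of squares is itself a submagma that is a left-zero band (from $xy\cdot z = xx$ one gets $(aa)(bb) = aa$, and $(aa)(aa) = aa$), so if it has more than one element it already contains a copy of $2_{LZ}$. That reduces the whole problem to: avoiding $2_{LZ}$ forces $|\{xx:x\in M\}| = 1$, which is exactly $D$. So the real work is just verifying that the squares form a left-zero subband under $L_1$, which is a one-line computation with the two defining identities — and the theorem follows. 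If that lemma goes through cleanly, there is no remaining obstacle; the risk is only that some product like $(aa)(bb)$ needs associativity plus both identities in a slightly non-obvious order, which is the single routine calculation I would actually carry out in full.
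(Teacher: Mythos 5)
Your proposal is correct and follows essentially the same route as the paper: the paper's proof takes $S(a,b)=\{aa,bb\}$ and verifies via \ref{L1} that it is a left zero subband, which is exactly your key lemma that the set of squares forms a left-zero band, so $aa\neq bb$ immediately yields a copy of $2_{LZ}$. The one computation you flag as the remaining risk, $(aa)(bb)=aa$, is indeed the single instance of $xy\cdot z=xx$ the paper uses, so nothing is missing.
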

\begin{proof}
Let $M$ be a semigroup in $L_1$. For $x,y\in M$, set $S(x,y) = \{xx,yy\}$. Whether or not the elements of $S(x,y)$ are
distinct, $S(x,y)$ is a subsemigroup with Cayley table
\begin{equation}\label{Eqn:lzb_L1D}
\begin{array}{c|cc}
\cdot & xx & yy \\
\hline
xx & xx & xx \\
yy & yy & yy
\end{array}
\end{equation}
The entries of \eqref{Eqn:lzb_L1D} are easily verfied using \ref{L1}.

Now if \ref{D} fails in $M$, then there exist $a,b\in M$ such that $aa\neq bb$. In this case, $|S(a,b)| = 2$ and $S(a,b)$ is
a copy of $2_{LZ}$. Conversely, if $M$ contains a copy of $2_{LZ}$, then $0\cdot 0 = 0\neq 1\neq 1\cdot 1$ and thus
\ref{D} fails.
\end{proof}

\subsection{$RB$ characterized inside $L_7$}
Recall the relevant identities:
\begin{align*}
    xy\cdot z=xz\,,\quad x\cdot yz=xz          \tag*{(L$_7$)} \\
    xy\cdot z=x\cdot yz\,,\quad x\cdot yx=x    \tag*{(RB)}
\end{align*}
Since both $L_7$ and $RB$ are varieties of semigroups, we will use associativity freely.

\begin{theorem}
A magma belongs to $RB$ if and only if it belongs to $L_7$ and avoids the 2-element null semigroup $2_N$.
\end{theorem}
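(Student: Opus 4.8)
The plan is to reduce the statement to a question about idempotency. First note that $L_7$ is a variety of semigroups, and that substituting $z=x$ into the $L_7$-identity $x\cdot yz=xz$ yields $x\cdot yx=xx$ for all $x,y$. Hence, for a (nonempty) magma $M\in L_7$, the rectangular-band law $x\cdot yx=x$ holds in $M$ if and only if $M$ is idempotent; since the associativity law $xy\cdot z=x\cdot yz$ already holds in $L_7$ (both sides equal $xz$), we conclude that $M\in L_7$ lies in $RB$ precisely when $M$ satisfies $xx=x$. In the other direction $RB\subseteq L_7$, since every rectangular band satisfies $xyz=xz$: from $x\cdot yx=x$ one first gets $x^3=x$ (take $y=x$), then $x^2=x$ (take $y=x^2$, using $x^2\cdot x=x^3=x$), and then $xyz=xz$ by a short manipulation using associativity and $x\cdot yx=x$; in any case this inclusion is already recorded in the lattice of Figure~\ref{fig1}. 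So it suffices to prove: a magma in $L_7$ is idempotent if and only if it avoids $2_N$.

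For the easy direction, if $M$ is idempotent then it cannot contain a copy of $2_N$, because in $2_N$ we have $1\cdot 1=0\neq 1$. For the converse, suppose $M\in L_7$ is not idempotent and pick $a\in M$ with $b:=aa\neq a$. Using the two $L_7$-identities, every product of the set $\{a,b\}$ collapses to $b$: indeed $a\cdot a=b$ by definition; $a\cdot b=a\cdot(aa)=aa=b$ by $x\cdot yz=xz$; $b\cdot a=(aa)\cdot a=aa=b$ by $xy\cdot z=xz$; and $b\cdot b=(aa)\cdot(aa)=a\cdot(aa)=b$. Thus $\{a,b\}$ is a two-element submagma with Cayley table
\[
\begin{array}{c|cc}
\cdot & a & b \\
\hline
a & b & b \\
b & b & b
\end{array}
\]
in which every product equals $b$, so the bijection $a\mapsto 1,\ b\mapsto 0$ is an isomorphism onto $2_N$. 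Hence $M$ contains a copy of $2_N$; taking the contrapositive completes the proof.

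I expect no serious obstacle: every computation is a one-step application of an $L_7$-identity, and the witness $a$ to non-idempotency literally generates the forbidden $2_N$. The only point needing a little care is the bookkeeping in the first paragraph, namely recognizing that, relative to $L_7$, the rectangular-band axioms add exactly idempotency, together with recalling the standard identity $xyz=xz$ for rectangular bands so that the inclusion $RB\subseteq L_7$ is in place.
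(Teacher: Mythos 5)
Your proof is correct and essentially matches the paper's: the paper takes the witness subsemigroup $\{a,aba\}$ for a failure $aba\neq a$ of \ref{RB}, and since $xyx=xx$ holds in every $L_7$-semigroup this is literally the same two-element null submagma $\{a,aa\}$ that you build from a failure of idempotency. Your preliminary observation that, relative to $L_7$, the rectangular-band axioms reduce exactly to $xx=x$ is a clean repackaging of the same idea (and, like the paper, you may simply take the inclusion $RB\subseteq L_7$ from the lattice in Figure \ref{fig1} rather than re-deriving $xyz=xz$).
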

\begin{proof}
Let $M$ be a semigroup in $L_7$. For $x,y\in M$, let $S(x,y) = \{x,xyx\}$. Whether or not the elements of $S(x,y)$ are
distinct, $S(x,y)$ is a subsemigroup of $M$ and is null:
\begin{equation}\label{Eqn:rb_L7}
\begin{array}{c|cc}
\cdot & x & xyx \\
\hline
  x & xyx & xyx \\
xyx & xyx & xyx
\end{array}
\end{equation}
The entries of \eqref{Eqn:lzb_L1D} are easily confirmed using \ref{L7}.

Now if \ref{RB} fails in $M$, then there exist $a,b\in M$ such that $aba\neq a$. In this case, $|S(a,b)|=2$ and
$S(a,b)$ is a copy of $2_N$. Conversely, if $M$ contains a copy of $2_N$, then $1\cdot 0\cdot 1 = 0\neq 1$, and thus
\ref{RB} fails.
\end{proof}

\subsection{$Z$ characterized inside $U$}
Recall the relevant identities:
\begin{align*}
   xy=xz\,,\quad xy\cdot z=xy   \tag*{(U)} \\
   xy=zu                        \tag*{(Z)}
\end{align*}

\begin{theorem}
A magma belongs to $Z$ if and only if it belongs to $U$ and avoids the $2$-element left zero band $2_{LZ}$.
\end{theorem}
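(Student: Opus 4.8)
The structure should mirror the two preceding theorems almost verbatim. Let $M$ be a magma in $U$, so $M$ satisfies $xy=xz$ and $xy\cdot z=xy$. The first identity says that the product $xy$ depends only on the left factor; write $L(x) := xy$ (independent of $y$). Then $M$ belongs to $Z$ precisely when $L$ is constant, i.e. $xy=zu$ for all $x,y,z,u$.

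First I would record the ``small submagma'' that $U$ forces: for $x,y\in M$, set $S(x,y) = \{xx, yy\}$, or perhaps more naturally $S(x,y)=\{L(x),L(y)\}=\{xy, yx\}$ — I would check which choice closes up. Using $xy=xz$ we get $L(x)\cdot L(x) = L(L(x))$ and $L(x)\cdot L(y) = L(L(x))$; and the identity $xy\cdot z=xy$ gives $L(x)\cdot z = L(x)$ for all $z$, so in particular $L(x)\cdot L(x)=L(x)$ and $L(x)\cdot L(y)=L(x)$. Hence on the set $\{L(x):x\in M\}$ the operation is exactly $a\cdot b = a$, a left zero band; any two-element subset of it is a copy of $2_{LZ}$.

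Next, the two directions. If $M$ fails to be in $Z$, then there are $a,b$ with $ab\neq ba$ (equivalently $L(a)\neq L(b)$, since every product from $a$ equals $L(a)$ and every product from $b$ equals $L(b)$, and a witness to the failure of $xy=zu$ can be normalized to this form using $xy=xz$). Then $\{L(a),L(b)\}$ has two elements and, by the computation above, is a copy of $2_{LZ}$, so $M$ does not avoid $2_{LZ}$. Conversely, if $M$ contains a copy of $2_{LZ}$, then inside it $0\cdot 0 = 0 \neq 1 = 1\cdot 1$, contradicting $xy=zu$; so $M\notin Z$. Combining, $M\in Z$ iff $M\in U$ and $M$ avoids $2_{LZ}$.

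The only thing requiring a moment's care — the ``main obstacle,'' though it is minor — is the reduction of ``$\exists x,y,z,u:\ xy\neq zu$'' to ``$\exists a,b:\ ab\neq ba$'' so that the two-element submagma is visibly $\{L(a),L(b)\}$ and not something larger; this uses $xy=xz$ to collapse each side to a product of the form $L(\cdot)$, and then $L(a)=L(b)$ for all $a,b$ would force $xy=zu$. Everything else is the same pattern as Theorems \ref{Thm:L1_D_avoids_A} and the $RB$-inside-$L_7$ theorem: exhibit a canonical $\le 2$-element submagma that is a left zero band exactly when the target identity fails.
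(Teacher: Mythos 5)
Your proof is correct and takes essentially the same approach as the paper: your two-element set $\{L(a),L(b)\}$ is exactly the submagma $S(a,b,c,d)=\{ab,cd\}$ used there, and both arguments rest on $xy\cdot z=xy$ forcing all products to act as left zeros. The normalization step you flag is handled in the paper simply by parametrizing the submagma by four witnesses $a,b,c,d$ with $ab\neq cd$, so no reduction to two variables is needed.
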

\begin{proof}
Let $M$ be a magma in $U$. For $x,y,z,u\in M$, let $S(x,y,z,u) = \{ xy,zu\}$. Whether or not the elements of $S(x,y,z,u)$ are
distinct, $S(x,y,z,u)$ is a submagma of $M$ with Cayley table
\begin{equation}\label{Eqn:lz_ZU}
\begin{array}{c|cc}
\cdot & xy & zu \\
\hline
xy & xy & zu \\
zu & zu & zu
\end{array}
\end{equation}
The entries of \eqref{Eqn:lz_ZU} are easily verified using \ref{U}.

Now if \ref{Z} fails in $M$, then there exist $a,b,c,d\in M$ such that $ab\neq cd$. In this case, $|S(a,b,c,d)| = 2$ and $S(a,b,c,d)$
is a copy of $2_{LZ}$. Conversely, if $M$ contains a copy of $2_{LZ}$, then $0\cdot 0 = 0\neq 1 = 1\cdot 0$, and thus \ref{Z} fails.
\end{proof}

\subsection{$Z$ characterized inside $L_1\cap C$}
Recall the relevant identities
\begin{align}
  xy\cdot z = xx\,,\quad  x\cdot yz = xx\,, \tag*{(L$_1$)} \\
  xy=yx \tag*{(C)} \\
  xy=zu \tag*{(Z)}
\end{align}
Since $L_1$ is a variety of semigroups, we use associativity freely. Recall that $L_1\cap C$
lies inside $D$ so that $xx=yy$ for all $x,y$. We let $0$ denote this constant value $xx$, which is a
zero element for any semigroup in $L_1\cap D$.

Let $Q = \{0,1,2,3\}$ be the magma defined by $1\cdot 2 = 2\cdot 1 = 3$ with all other products equaling $0$.
It is easy to see that $Q$ belongs to $L_1\cap C$.

\begin{theorem}\label{Thm:L1C_Z}
A magma belongs to $Z$ if and only if it belongs to $L_1\cap C$ and avoids the magma $Q$ with Cayley table
\[
\begin{array}{c|cccc}
Q & 0 & 1 & 2 & 3 \\
\hline
0 & 0 & 0 & 0 & 0 \\
1 & 0 & 0 & 3 & 0 \\
2 & 0 & 3 & 0 & 0 \\
3 & 0 & 0 & 0 & 0
\end{array}
\]
\end{theorem}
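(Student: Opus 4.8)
The plan is to follow the same template as the preceding three theorems: exhibit, for a magma $M$ in $L_1\cap C$, a family of submagmas indexed by tuples of elements of $M$, such that (i) each member of the family is a submagma, (ii) if $Z$ fails then one member is forced to be a copy of $Q$, and (iii) if $M$ contains a copy of $Q$ then $Z$ visibly fails. Direction (iii) is immediate: in the displayed Cayley table of $Q$ we have $1\cdot 2 = 3 \neq 0 = 0\cdot 0$, so $Z$ fails; and one checks directly from the table that $Q$ really does lie in $L_1\cap C$ (it is commutative, it has a zero $0$, every triple product collapses to $0 = xx$, so both $L_1$ identities hold). The work is in the forward direction.

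For the forward direction, suppose $Z$ fails in $M$, so there are $a,b,c,d$ with $ab\neq cd$. Using associativity, the $L_1$ identities, and commutativity, the first reduction I would make is to normalize which products can be ``nontrivial,'' i.e. different from the zero element $0 = xx$. Note that any product of length $\geq 3$ equals $0$ by \ref{L1}, and $0$ is a two-sided zero in $L_1\cap D$. So the only possibly-nonzero products in any submagma are products $uv$ of two of its generators. If $ab \neq 0$, I would try to build the copy of $Q$ from $S = \{0, a, b, ab\}$; if instead $ab = 0$ but $cd \neq 0$, use $\{0,c,d,cd\}$; and if $ab = cd = 0$ we would have $ab = cd$, a contradiction, so at least one of these products is a genuinely nonzero element, say we rename so that $p := ab \neq 0$. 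The candidate submagma is then $S = \{0, a, b, ab\}$, and I would need: that these four elements are pairwise distinct; that $S$ is closed; and that the Cayley table of $S$ matches $Q$ after the identification $0\leftrightarrow 0$, $a\leftrightarrow 1$, $b\leftrightarrow 2$, $ab\leftrightarrow 3$. Closure and most table entries follow because every product inside $S$ that is not $a\cdot a$, $b\cdot b$, $ab\cdot(\cdot)$, $(\cdot)\cdot ab$, or $a\cdot b$/$b\cdot a$ is either of length $\geq 2$ reducing to a triple product $=0$, or is $xx=0$; commutativity gives $ba = ab$. The entries like $a\cdot(ab)$, $(ab)\cdot b$, etc., are triple products hence $0$; and $(ab)(ab)=0$ as well. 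So the table is forced to be that of $Q$ \emph{provided} the four elements are distinct.

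The main obstacle is exactly the distinctness of $\{0,a,b,ab\}$: a priori $ab$ could coincide with $a$, with $b$, or (if $a=b$, say) the set could collapse. Here is where I expect to use the hypotheses most sharply. First, $ab\neq 0$ by choice. Suppose $ab = a$. Then $a = ab = (ab)b = a\cdot b\cdot b$ (associativity) $= 0$ by \ref{L1} (a triple product), contradicting $ab = a \neq 0$; symmetrically $ab\neq b$ using $ab = b \Rightarrow b = a(ab)=aab=0$. Next suppose $a = b$; then $ab = aa = 0$, again contradicting $ab\neq 0$. Finally suppose $a=0$ (or $b=0$): then $ab = 0\cdot b = 0$, contradiction. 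So all four elements are distinct, $S\cong Q$, and the forward direction is complete. I would then close by remarking, as in the earlier proofs, that the converse direction (iii) was already observed, so the equivalence holds; and I would double-check at the outset that $Q\in L_1\cap C$ so that the forbidden substructure statement is nontrivial (i.e. $Q$ really sits inside the larger class). The one place to be careful in writing this up is to make sure that the normalization ``WLOG $ab\neq 0$'' is stated correctly — it uses that if both $ab$ and $cd$ were $0$ they would be equal — and that associativity plus the $L_1$ identities really do send \emph{every} length-$\geq 3$ product to $0$, which is what makes all the ``off-diagonal'' table entries of $S$ collapse.
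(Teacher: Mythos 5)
Your proposal is correct and follows essentially the same route as the paper: both construct the candidate submagma $S=\{0,a,b,ab\}$ from a witness $ab\neq 0$ to the failure of \ref{Z}, verify its Cayley table matches $Q$ using \ref{L1} and \ref{C}, and observe the converse from the entry $1\cdot 2=3\neq 0$. Your write-up is in fact slightly more detailed than the paper's, which asserts $|S(a,b)|=4$ without spelling out the distinctness checks you supply.
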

\begin{proof}
Let $M$ be a semigroup in $L_1\cap C$. For $x,y\in M$, let $S(x,y) = \{0,x,y,xy\}$. Whether or not the elements of $S(x,y)$
are distinct, $S(x,y)$ is a subsemigroup of $M$ with Cayley table
\begin{equation}\label{Eqn:Q_ZL1C}
\begin{array}{c|cccc}
\cdot & 0 & x & y & xy \\
\hline
 0 & 0 &  0 &  0 & 0 \\
 x & 0 &  0 & xy & 0 \\
 y & 0 & xy &  0 & 0 \\
xy & 0 &  0 &  0 & 0
\end{array}
\end{equation}
The entries of \eqref{Eqn:Q_ZL1C} are easily verified using \ref{L1} and \ref{C}.

If \ref{Z} fails in $M$, then there exist $a,b\in M$ with $ab\neq 0$. In this case $|S(a,b)| = 4$ and $S(a,b)$ is a copy of $Q$.
Conversely, if $M$ contains a copy of $Q$, then $0\cdot 0 = 0\neq 3= 1\cdot 2$, and thus \ref{Z} fails.
\end{proof}

\subsection{$LZ$ characterized inside $\widetilde{U}$}
Recall the relevant identities:
\begin{align*}
    xy=xz\,,\quad xy\cdot z=x   \tag*{($\widetilde{\text{U}}$)} \\
    xy=x                        \tag*{(LZ)}
\end{align*}

\begin{theorem}
A magma belongs to $LZ$ if and only if it belongs to $\widetilde{U}$ and avoids the magma $N$ with Cayley table
\[
\begin{array}{c|cc}
N & 0 & 1 \\
\hline
0 & 1 & 1 \\
1 & 0 & 0
\end{array}
\]
\end{theorem}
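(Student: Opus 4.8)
The plan is to exploit the fact that in a magma $M\in\widetilde{U}$ the identity $xy=xz$ forces every product to depend only on its left argument. So the first step is to define a unary operation $f\colon M\to M$ by $f(x)=xy$, which is well-defined by the first identity of $\widetilde{U}$, and to note that $x\cdot y=f(x)$ for all $x,y$. The second identity $xy\cdot z=x$ then reads $f(f(x))=x$, so $f$ is an involution of $M$; in other words, a magma in $\widetilde{U}$ is nothing more than a set equipped with an involution. Under this dictionary, $M\in LZ$ (i.e.\ $xy=x$) holds precisely when $f=\operatorname{id}_M$.

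The forbidden-substructure direction is then immediate: if $M\notin LZ$, choose $a\in M$ with $f(a)\neq a$. Since $f$ is an involution, $S=\{a,f(a)\}$ has exactly two elements and $f$ restricts to the transposition of $S$, so $S$ is a submagma, and computing its four products from $x\cdot y=f(x)$ gives $a\cdot a=f(a)$, $a\cdot f(a)=f(a)$, $f(a)\cdot a=a$, $f(a)\cdot f(a)=a$. Under the identification $a\mapsto 0$, $f(a)\mapsto 1$ this is exactly the Cayley table of $N$, so $M$ contains a copy of $N$. For the converse I would simply observe that $N\notin LZ$ (indeed $0\cdot 0=1\neq 0$ in $N$), so any magma with a submagma isomorphic to $N$ has the identity $xy=x$ failing and hence is not in $LZ$. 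I would also record at the outset that $N$ itself lies in $\widetilde{U}$: its product depends only on the left argument, and the associated unary map is the fixed-point-free involution of a two-element set, so $xy\cdot z=x$ holds.

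I do not anticipate any real obstacle here; the argument runs entirely parallel to the three preceding theorems, with the involution $f$ playing the role that the sets $S(x,y)$ played there. The only point requiring care is the bookkeeping that places the copy of $N$ the correct way round — specifically that $a\cdot a=f(a)$ rather than $a$ — which is precisely why one needs $f$ to have no fixed point on $S$, i.e.\ why the element $a$ must be chosen with $f(a)\neq a$.
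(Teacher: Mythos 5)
Your proof is correct and is essentially the paper's argument in different clothing: the paper takes the submagma $S(a,b)=\{a,ab\}$ for a witness $ab\neq a$ and checks its Cayley table is that of $N$, which is exactly your $\{a,f(a)\}$ with $f(x)=xy$. The reformulation of $\widetilde{U}$-magmas as sets with an involution is a pleasant way to organize the same computation, but the substance (the two-element generated submagma and the verification of its four products) is identical.
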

\begin{proof}
Let $M$ be a magma in \ref{tildeU}. For $x,y\in M$, let $S(x,y) = \{x,xy\}$. Whether or not the elements of $S(x,y)$ are
distinct, $S(x,y)$ is a submagma of $M$ with Cayley table
\begin{equation}\label{Eqn:N_LZ}
\begin{array}{c|cc}
\cdot & x & xy \\
\hline
 x & xy & xy \\
xy &  x &  x
\end{array}
\end{equation}
The entries of \eqref{Eqn:N_LZ} are easily verified using \ref{tildeU}.

If \ref{LZ} fails in $M$, then there exist $a,b\in M$ such that $ab\neq a$. In this case $|S(a,b)|=2$ and $S(a,b)$ is a copy of $N$.
Conversely, if $M$ contains a copy of $N$, then $0\cdot 1 = 1\neq 0$ and thus \ref{LZ} fails.
\end{proof}

\subsection{$LZ$ characterized inside $RB$}
Recall the relevant identities:
\begin{align*}
    xy\cdot z = x\cdot yz\,,\quad   x\cdot yx = x   \tag*{(RB)} \\
    xy=x                                            \tag*{(LZ)}
\end{align*}
Since both $RB$ and $LZ$ are varieties of semigroups, we use associativity freely.

\begin{theorem}
A magma belongs to $LZ$ if and only if it belongs to $RB$ and avoids the $2$-element right zero band $2_{RZ}$.
\end{theorem}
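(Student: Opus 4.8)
The plan is to follow the template of the preceding results. Given a magma $M$ in $RB$, I want to attach to each ordered pair $x,y\in M$ a submagma $S(x,y)$ that is always a right zero band on at most two elements, and that has exactly one element precisely when $xy=x$. Before doing so, I would record the elementary fact that the two $RB$ identities force idempotency $x^2=x$ and the ``collapse'' law $xyz=xz$: from $x\cdot yx=x$ together with associativity one obtains $x^3=x$ and $x^2y=xy$, whence $x^2=x$, and then $xyz=(xzx)\,y\,(zxz)=xz$. (These are of course just the standard facts that a magma in $RB$ is a rectangular band.)

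With those in hand, the natural choice is $S(x,y)=\{x,\,xy\}$. Using idempotency, the collapse law, and the identity $xyx=x$, the four products evaluate as $x\cdot x=x$, $x\cdot xy=xy$, $xy\cdot x=x$ and $xy\cdot xy=xy$, so whether or not $x$ and $xy$ are distinct, $S(x,y)$ is a submagma with Cayley table
\[
\begin{array}{c|cc}
\cdot & x & xy\\
\hline
x & x & xy\\
xy & x & xy
\end{array}
\]
that is, a right zero band. The two directions then follow the now-familiar pattern: if $M$ avoids $2_{RZ}$ but $M\notin LZ$, pick $a,b$ with $ab\neq a$; then $|S(a,b)|=2$ and $S(a,b)$ is a copy of $2_{RZ}$, a contradiction, so $M\in LZ$. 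Conversely, a left zero band is a rectangular band and every submagma of it is again a left zero band, while $2_{RZ}$ is not one since $0\cdot 1=1\neq 0$; hence every $M\in LZ$ avoids $2_{RZ}$, and $LZ\subseteq RB$.

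The only step that needs any thought is the preliminary reduction: the bare axioms of $RB$ do not literally mention idempotency or the collapse law, yet these are exactly what pins down the entries of $S(x,y)$, so they must be derived first. The other (minor) decision is to use the pair $\{x,xy\}$ rather than some other two-element set. Once those are settled the argument is purely mechanical and mirrors the earlier proofs almost verbatim; I do not expect any genuine obstacle.
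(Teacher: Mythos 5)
Your proposal is correct and follows essentially the same route as the paper: both derive idempotency from the $RB$ axioms, take $S(x,y)=\{x,xy\}$ with the identical right-zero-band Cayley table, and conclude by the same two-directional argument. The extra collapse law $xyz=xz$ you derive is harmless but not needed, since $xy\cdot x=x$ and $xy\cdot xy=xy$ already follow directly from $x\cdot yx=x$ and idempotency.
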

\begin{proof}
Let $M$ be a semigroup in $RB$. Note that for all $x\in M$, $xx\byref{RB} x(xxx) = x\cdot xx\cdot x\byref{RB} x$, so that every element
of $M$ is an idempotent. For $x,y\in M$, let $S(x,y) = \{x,xy\}$. Whether or not the element of $S(x,y)$ are
distinct, $S(x,y)$ is a subsemigroup of $M$ with Cayley table
\begin{equation}\label{Eqn:LZ_RB}
\begin{array}{c|cc}
\cdot & x & xy \\
\hline
 x &  x & xy \\
xy &  x & xy
\end{array}
\end{equation}
The off-diagonal entries of \eqref{Eqn:LZ_RB} are easily verified using \ref{RB}.

If \ref{LZ} fail in $M$, then there exist $a,b\in M$ such that $ab\neq a$. In this case $|S(a,b)|=2$ and $S(a,b)$ is a copy
of $2_{RZ}$. Conversely, if $M$ contains a copy of $2_{RZ}$, then $0\cdot 1 = 1\neq 0$ and thus \ref{LZ} fails.
\end{proof}

\subsection{$LZ$ characterized inside $U$}
Recall the relevant identities:
\begin{align*}
   xy=xz,\quad xy\cdot z=xy \tag*{(U)} \\
   xy=x                     \tag*{(LZ)}
\end{align*}

\begin{theorem}
A magma belongs to $LZ$ if and only if it belongs to $U$ and avoids the $2$-element null semigroup $2_N$.
\end{theorem}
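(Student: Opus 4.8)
The plan is to imitate exactly the pattern used in the preceding theorems: identify, for each pair of elements $x,y$ in a magma $M$ satisfying \ref{U}, a small submagma $S(x,y)$ whose Cayley table is forced by the identities, and which degenerates to a copy of the forbidden structure $2_N$ precisely when \ref{LZ} fails. Recall that \ref{U} says $xy=xz$ (so the product $xy$ depends only on the left factor; write $\rho(x):=xy$ for the common value) together with $xy\cdot z=xy$, i.e. $\rho(\rho(x))=\rho(x)$. The natural candidate is $S(x,y)=\{xy,\,xy\cdot(xy)\}$, or more transparently $S(x):=\{x,\,xy\}=\{x,\rho(x)\}$; since $\rho$ is well-defined on first coordinates, this set is closed. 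So first I would set $a':=xy$ and compute the four products on $\{a',\rho(a')\}$.

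Next I would verify the entries of the Cayley table. Using \ref{U}: $a'\cdot a' = \rho(a') = a'\cdot\rho(a')$ (left factor $a'$ in both), and $\rho(a')\cdot a' = \rho(\rho(a')) = \rho(a')$ by the second identity of \ref{U}, and likewise $\rho(a')\cdot\rho(a')=\rho(\rho(a'))=\rho(a')$. Hence every product in $S=\{a',\rho(a')\}$ equals the single element $\rho(a')$, so $S$ is a null submagma. Concretely the table is
\begin{equation*}
\begin{array}{c|cc}
\cdot & a' & \rho(a') \\
\hline
a' & \rho(a') & \rho(a') \\
\rho(a') & \rho(a') & \rho(a')
\end{array}
\end{equation*}
which is exactly the shape of $2_N$ when $a'\neq\rho(a')$.

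Then comes the equivalence. If \ref{LZ} fails in $M$, there exist $a,b$ with $ab\neq a$; taking $x=a$, $y=b$ gives $a'=ab$ and $\rho(a')=ab\cdot b\overset{\ref{U}}{=}ab=a'$ — hmm, that would make $|S|=1$, so this naive choice does not separate the elements. The fix: when \ref{LZ} fails, take $a,b$ with $ab\neq a$ and instead use the submagma $\{a,ab\}$ directly; its table, computed from \ref{U}, is $a\cdot a=ab=a\cdot(ab)$ and $(ab)\cdot a=(ab)\cdot(ab)=ab\cdot b=ab$, so again all products equal $ab$, and since $a\neq ab$ this is a copy of $2_N$. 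Conversely, if $M$ contains a copy of $2_N$, then $0\cdot 0=0\neq 1=\text{(some product)}$ — more precisely in $2_N$ we have $0\cdot 0=0$ but $0\cdot 1$ is also $0$, so to exhibit a failure of \ref{LZ} note that in $2_N$, $1\cdot 0=0\neq 1$, and thus $xy=x$ fails. The main obstacle, and the only real subtlety, is choosing the separating submagma correctly: one must use $\{a,ab\}$ rather than $\{ab,ab\cdot ab\}$, because \ref{U}'s second identity collapses the latter; once that observation is made, the verification is a two-line check against \ref{U}, entirely parallel to the $LZ$-inside-$\widetilde U$ and $LZ$-inside-$RB$ proofs already given.
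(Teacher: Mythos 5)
Your proposal is correct and, after the self-corrected false start with $\{xy,\,xy\cdot xy\}$, lands on exactly the paper's argument: the submagma $S(x,y)=\{x,xy\}$ is null by \ref{U}, has two elements precisely when $xy\neq x$, and $1\cdot 0=0\neq 1$ in $2_N$ witnesses the failure of \ref{LZ}. The detour could simply be omitted; the rest matches the paper's proof.
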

\begin{proof}
Let $M$ be a magma in $U$. For $x,y\in M$, let $S(x,y) = \{x,xy\}$. Whether or not the elements of $S(x,y)$ are distinct, $S(x,y)$ is a submagma of $M$
with Cayley table
\begin{equation}\label{Eqn:U_LZ}
\begin{array}{c|cc}
\cdot & x & xy \\
\hline
 x &  xy & xy \\
xy &  xy & xy
\end{array}
\end{equation}
The entries of \eqref{Eqn:U_LZ} are easily verified using \ref{U}.

If \ref{LZ} fails in $M$, then there exist $a,b\in M$ such that $ab\neq a$. In this case $|S(a,b)|=2$ and $S(a,b)$ is a copy of $2_N$.
Conversely, if $M$ contains a copy of $2_N$, then $1\cdot 0 = 0\neq 1$ and thus \ref{LZ} fails.
\end{proof}

\subsection{$T$ characterized inside $LZ$}
Recall the relevant identities:

\begin{align*}
    xy=x                    \tag*{(LZ)} \\
    x=y                     \tag*{(T)}
\end{align*}

\begin{theorem}
A magma belongs to $T$ if and only if it belongs to $LZ$ and avoids the $2$-element left zero band $2_{LZ}$.
\end{theorem}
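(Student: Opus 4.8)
The statement asserts $T = \llbracket LZ \mid 2_{LZ}\rrbracket$, i.e.\ a magma is trivial iff it is a left zero band containing no copy of $2_{LZ}$. The forward direction is immediate: a trivial magma satisfies \ref{LZ} vacuously and has only one element, so it cannot contain a two-element submagma, in particular not $2_{LZ}$. So the content is the converse.

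For the converse, I would argue by contraposition in the style of the preceding theorems. Suppose $M$ is a left zero band with $|M| \geq 2$; I want to exhibit a copy of $2_{LZ}$ inside $M$. Pick any two distinct elements $a, b \in M$ and set $S(a,b) = \{a,b\}$. Since $M$ satisfies $xy = x$, the Cayley table of $S(a,b)$ is
\[
\begin{array}{c|cc}
\cdot & a & b \\
\hline
a & a & a \\
b & b & b
\end{array}
\]
so $S(a,b)$ is a submagma, and as $a \neq b$ it has exactly two elements; the map $a \mapsto 0$, $b\mapsto 1$ is an isomorphism onto $2_{LZ}$. Hence if $M$ avoids $2_{LZ}$, then $|M| \leq 1$, which together with nonemptiness gives $|M| = 1$, i.e.\ $M$ satisfies \ref{T}. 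Conversely (restating the easy direction), if $M$ contains a copy of $2_{LZ}$ then $0 \neq 1$ in $M$, so \ref{T} fails.

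There is essentially no obstacle here: unlike the harder cases mentioned in the introduction (such as $U = \llbracket L_3 \mid \mathcal{F}\rrbracket$), the only "forbidden configuration" obstructing triviality of a left zero band is the existence of two distinct elements, and those two elements already form a copy of $2_{LZ}$ with no further hypotheses needed. The only point requiring (minimal) care is the bookkeeping: noting that the two-element submagma one builds is genuinely two-element precisely when $M$ is nontrivial, and checking that $2_{LZ}$ itself is not trivial so that the two directions of the biconditional line up. This makes the theorem one of the "straightforward" ones alluded to in the abstract, and the proof follows the same $S(x,y)$ template used throughout the section.
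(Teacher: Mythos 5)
Your proof is correct and takes essentially the same approach as the paper: any two distinct elements of a left zero band form a two-element submagma isomorphic to $2_{LZ}$, so avoidance of $2_{LZ}$ forces triviality. The paper's version is just slightly more terse, noting that every nonempty subset is a submagma.
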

\begin{proof}
Let $M$ be a magma in $LZ$. By \ref{LZ}, any nonempty subset of $M$ forms a submagma of $M$.
If \ref{T} fails in $M$, then $|M|\geq 2$ and there exist a $2$-element submagma of $M$ which is a copy of $2_{LZ}$.
Conversely, if $M$ contains a copy of $2_{LZ}$, then $|M|\geq 2$ and so evidently \ref{T} fails.
\end{proof}

\subsection{$T$ characterized inside $Z$}
Recall the relevant identities:
\begin{align*}
    xy=zu    \tag*{(Z)} \\
    x=y      \tag*{(T)}
\end{align*}

\begin{theorem}
A magma belongs to $T$ if and only if it belongs to $Z$ and avoids the $2$-element null semigroup $2_N$.
\end{theorem}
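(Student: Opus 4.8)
The plan is to mimic exactly the pattern used in all the preceding theorems of this section, which is now clearly the intended uniform strategy. A null semigroup satisfies \ref{Z}, so any nonempty subset of a magma in $Z$ is automatically a subsemigroup (every product lands on the single constant value $xy$, which is itself a product and hence belongs to the subset). Thus the submagma-closure issue is trivial here, just as it was in the $T$-inside-$LZ$ case.

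For the forward direction, suppose $M$ is a semigroup in $Z$ but \ref{T} fails in $M$. Then $|M|\geq 2$, so pick two distinct elements; by the remark above they form a $2$-element subsemigroup $S$. Since $S$ inherits \ref{Z}, all four products in its Cayley table equal the same element $c\in S$. One then has to observe that $c$ is forced: in a $2$-element null semigroup the constant value of all products must be one of the two elements, and either choice yields a magma isomorphic to $2_N$ (relabeling so that $c=0$). Hence $S\cong 2_N$ and $M$ contains a copy of $2_N$.

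For the converse, if $M$ contains a copy of $2_N$, then in that copy $0\cdot 0 = 0 \neq 1 = $ ... wait — in $2_N$ every product is $0$, so one instead notes that the element $1$ is not a product at all, or more directly that $1 \neq 0 = 0\cdot 1$ while $1 = $ nothing; the clean statement is simply: \ref{T} asserts $x=y$ for all $x,y$, and in the copy of $2_N$ we have $0\neq 1$, so \ref{T} fails in $M$. (Alternatively: $0\cdot 0 = 0 \neq 1$, yet if \ref{T} held we would have $0=1$, a contradiction.)

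I expect no real obstacle; the only point requiring a moment's care is the forward direction, where one must check that \emph{every} $2$-element subsemigroup of a null semigroup is isomorphic to $2_N$ (not merely that $2_N$ is \emph{a} $2$-element null semigroup) — but this is immediate from \ref{Z}, since the common value of all products is a single element and the resulting table is determined up to the relabeling sending that element to $0$. This makes the argument structurally identical to the $T$-inside-$LZ$ proof, just with the left-zero table replaced by the null table.
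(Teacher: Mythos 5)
Your overall strategy matches the paper's, but the opening claim on which your forward direction rests is false: it is \emph{not} true that every nonempty subset of a magma in \ref{Z} is a submagma. In a null semigroup all products equal one fixed element $0$ of $M$; a nonempty subset $S$ is a submagma precisely when $0\in S$. Your parenthetical justification (``the constant value is itself a product and hence belongs to the subset'') conflates being a product of elements of $M$ with being an element of $S$. Concretely, in the three-element null semigroup $M=\{0,1,2\}$ with all products equal to $0$, the subset $\{1,2\}$ is not closed, since $1\cdot 2=0\notin\{1,2\}$. Consequently the step ``pick two distinct elements; by the remark above they form a $2$-element subsemigroup'' fails whenever neither chosen element is $0$.

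The repair is exactly the paper's argument: let $0$ be the common value of all products and, for each $x\in M$, take $S(x)=\{0,x\}$, which \emph{is} a submagma and a null semigroup. If \ref{T} fails then some $x\neq 0$ exists, so $S(x)$ is a copy of $2_N$; conversely a copy of $2_N$ gives two distinct elements, so \ref{T} fails. Your converse direction, after the hesitation, does land on the correct observation ($0\neq 1$ in any copy of $2_N$), and your remark that any genuine $2$-element null subsemigroup is isomorphic to $2_N$ is fine; only the construction of that subsemigroup needs the fix above.
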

\begin{proof}
Let $M$ be a magma in $Z$, and let $0\in M$ be the element such that $xy=0$ for all $x,y\in M$.
For $x\in M$, $S(x) = \{0,x\}$ is evidently a submagma and a null semigroup. Thus $M$ avoids $2_N$
if and only if $S(x)$ is a singleton, that is, if and only if \ref{T} holds for all $x,y\in M$.
\end{proof}

\subsection{$L_1\cap C$ characterized inside $L_1\cap D$}
Recall the relevant identities
\begin{align*}
  xy\cdot z = xx\,, \quad x\cdot yz = xx  \tag*{(L$_1$)} \\
  xx=yy \tag*{(D)} \\
  xy=yx \tag*{(C)}
\end{align*}
Since $L_1$ is a variety of semigroups, we use associativity freely. For a magma in $D$, let $0$ denote the element such that $xx=0$ for all $x$. For a semigroup $M$ in $L_1\cap D$, $0$ is a zero element. Indeed, $0x = xxx \byref{L1} xx  0$ and $x0 = xxx \byref{L1} xx = 0$.

For a semigroup in $L_1\cap C$, note that $xx\byref{L1} xyx\byref{C} yxx\byref{L1} yy$, so that \ref{D} holds. Thus $L_1\cap C\subseteq L_1\cap D$.

\begin{theorem}\label{Thm:L1D_C}
A semigroup in $L_1\cap D$ belongs to $L_1\cap C$ if and only if it belongs to $L_1\cap D$ and avoids both of the magmas $F$ and $G$ with the following Cayley tables
\[
\begin{array}{c|cccc}
F & 0 & 1 & 2 & 3 \\
\hline
0 & 0 & 0 & 0 & 0 \\
1 & 0 & 0 & 3 & 0 \\
2 & 0 & 0 & 0 & 0 \\
3 & 0 & 0 & 0 & 0
\end{array}
\qquad
\begin{array}{c|ccccc}
G & 0 & 1 & 2 & 3 & 4\\
\hline
0 & 0 & 0 & 0 & 0 & 0\\
1 & 0 & 0 & 3 & 0 & 0\\
2 & 0 & 4 & 0 & 0 & 0\\
3 & 0 & 0 & 0 & 0 & 0\\
4 & 0 & 0 & 0 & 0 & 0
\end{array}
\]
\end{theorem}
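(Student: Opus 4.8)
The plan is to follow the template of the preceding theorems: dispose of the ``only if'' direction by inspection, and prove the contrapositive of the ``if'' direction by producing a copy of $F$ or of $G$ inside any semigroup of $L_1\cap D$ that fails to be commutative.

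For the ``only if'' direction I would just observe that neither $F$ nor $G$ is commutative --- in $F$ we have $1\cdot 2 = 3 \ne 0 = 2\cdot 1$, and in $G$ we have $1\cdot 2 = 3 \ne 4 = 2\cdot 1$ --- so, since $C$ passes to submagmas, no member of $L_1\cap C$ can contain either of them. (One should also record the routine check that $F$ and $G$ genuinely lie in $L_1\cap D$: in each, $xx = 0$ for every $x$, and every product involving a product, as well as every square, equals $0$, so associativity, $L_1$, and $D$ all hold.)

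For the ``if'' direction, suppose $M\in L_1\cap D$ is not commutative and pick $a,b\in M$ with $ab\ne ba$. Write $0$ for the common value $xx$, which by the discussion preceding the theorem is a zero of $M$. Using the $L_1$ identities one verifies that $a\ne b$, that $a,b\ne 0$, and that $\{0,a,b,ab,ba\}$ is a submagma in which the only products that need not be $0$ are $a\cdot b = ab$ and $b\cdot a = ba$ (every product involving $ab$ or $ba$, and every square, reduces to some $xx = 0$ by $L_1$). Now split into two cases. If one of $ab,ba$ equals $0$, then --- as $ab\ne ba$ --- exactly one does, and after possibly interchanging $a$ and $b$ we may assume $ba = 0\ne ab$; the $L_1$ identities then force $ab\notin\{0,a,b\}$, so $\{0,a,b,ab\}$ is a four-element submagma whose only nonzero product is $a\cdot b = ab$, i.e.\ a copy of $F$. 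If instead $ab\ne 0\ne ba$, the $L_1$ identities force $ab,ba\notin\{0,a,b\}$, and $ab\ne ba$ by hypothesis, so $\{0,a,b,ab,ba\}$ consists of five distinct elements and is a copy of $G$. In either case $M$ does not avoid $\{F,G\}$, which is the contrapositive we wanted; and the reverse implication (a copy of $F$ or $G$ makes $C$ fail) is immediate from the non-commutativity noted above.

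The one point requiring care, and the main (minor) obstacle, is the bookkeeping of possible coincidences among $0,a,b,ab,ba$: one must use identities such as $(ab)\cdot b = aa$ and $a\cdot(ab) = aa$ to show that $ab = a$ would force $a = 0$ and $ab = b$ would force $b = 0$, and symmetrically for $ba$, so that in each case the exhibited submagma has exactly the stated cardinality and hence is genuinely isomorphic to $F$ or $G$ rather than to some smaller degenerate quotient. Everything else is a direct computation from associativity and $L_1$.
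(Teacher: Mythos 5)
Your proposal is correct and follows essentially the same route as the paper's proof: form the submagma $S(a,b)=\{0,a,b,ab,ba\}$ for a non-commuting pair, use \ref{L1} to rule out coincidences of $ab$ or $ba$ with $0,a,b$, and then split according to whether one of $ab,ba$ is $0$ (giving $F$) or neither is (giving $G$). The coincidence checks you flag as the delicate point are exactly the ones the paper carries out (e.g.\ $a=ab$ forces $ab=abb=aa=0$ and $ba=bab=bb=0$, contradicting $ab\neq ba$).
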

\begin{proof}
Let $M$ be a semigroup in $L_1\cap D$. For $x,y\in M$, let $S(x,y) = \{0,x,y,xy,yx\}$. Whether or not the elements of $S(x,y)$ are distinct, $S(x,y)$ is a subsemigroup of
$M$ with Cayley table
\begin{equation}\label{Eqn:L1D_C}
\begin{array}{c|ccccc}
\cdot & 0 & x & y & xy & yx \\
\hline
 0 &  0 &  0 &  0 &  0 &  0 \\
 x &  0 &  0 & xy &  0 &  0 \\
 y &  0 & yx &  0 &  0 &  0 \\
xy &  0 &  0 &  0 &  0 &  0 \\
yx &  0 &  0 &  0 &  0 &  0
\end{array}
\end{equation}
The entries of \eqref{Eqn:L1D_C} are easily verified using \ref{L1} and \ref{D}.

If \ref{C} fails in $M$, then there exist $a,b\in M$ such that $ab\neq ba$. If $a=ab$, then $ba=bab=bb=0$ and $ab=abb=aa=0$, a contradiction. If $b=ab$, then $ab=aab=aa=0$ and
$ba=aba=aa=0$, again a contradiction. Similarly, $a\neq ba$ and $b\neq ba$. Thus $|S(a,b)|\geq 4$. Hence there are two possibilities: either (say) $ba = 0$ and $ab\neq 0$ or else neither $ab$ nor $ba$ equal $0$. In the former case, $S(a,b) = \{0,a,b,ab\}$ is a copy of $F$. In the latter case, $S(a,b) = \{0,a,b,ab,ba\}$ is a copy of $G$.

Conversely, if $M$ contains a copy of $F$ or a copy of $G$, then in either case, $1\cdot 2\neq 2\cdot 1$ and thus \ref{C} fails.
\end{proof}

\subsection{$U$ characterized inside $L_1$}
Recall the relevant identities
\begin{align*}
  xy\cdot z = xx\,, \quad x\cdot yz = xx \tag*{(L$_1$)} \\
  xy = xz\,, \quad  xy\cdot z = xy\,.    \tag*{(U)}
\end{align*}
Since $L_1$ is a variety of semigroups, we will use associativity freely.
To prove that a semigroup in $L_1$ is in $U$, it is sufficient to prove $xy=xz$
for all $x,y,z$, which is equivalent to $xy=xx$ for all $x,y$.

\begin{theorem}\label{Thm:L1_U}
A magma belongs to $U$ if and only if it belongs to $L_1$ and avoids each of the magmas $K_1$--$K_5$ with the following
Cayley tables:
\[
\begin{array}{c|cccc}
K_1 & 0 & 1 & 2 & 3 \\
\hline
0 & 2 & 2 & 2 & 2 \\ 
1 & 3 & 2 & 2 & 2 \\ 
2 & 2 & 2 & 2 & 2 \\ 
3 & 2 & 2 & 2 & 2    
\end{array}
\qquad
\begin{array}{c|cccc}
K_2 & 0 & 1 & 2 & 3 \\
\hline
0 & 2 & 3 & 3 & 3 \\ 
1 & 3 & 2 & 2 & 2 \\ 
2 & 2 & 2 & 2 & 2 \\ 
3 & 2 & 2 & 2 & 2    
\end{array}
\]

\[
\begin{array}{c|ccccc}
K_3 & 0 & 1 & 2 & 3 & 4 \\
\hline
0 & 2 & 4 & 2 & 2 & 2 \\ 
1 & 3 & 2 & 2 & 2 & 2 \\ 
2 & 2 & 2 & 2 & 2 & 2 \\ 
3 & 2 & 2 & 2 & 2 & 2 \\ 
4 & 2 & 2 & 2 & 2 & 2    
\end{array}
\qquad
\begin{array}{c|ccccc}
K_4 & 0 & 1 & 2 & 3 & 4 \\
\hline
0 & 4 & 4 & 4 & 4 & 4 \\ 
1 & 3 & 2 & 2 & 2 & 2 \\ 
2 & 2 & 2 & 2 & 2 & 2 \\ 
3 & 2 & 2 & 2 & 2 & 2 \\ 
4 & 4 & 4 & 4 & 4 & 4    
\end{array}
\]

\[
\begin{array}{c|cccccc}
K_5 & 0 & 1 & 2 & 3 & 4 & 5 \\
\hline
0 & 5 & 4 & 5 & 5 & 5 & 5 \\ 
1 & 3 & 2 & 2 & 2 & 2 & 2 \\ 
2 & 2 & 2 & 2 & 2 & 2 & 2 \\ 
3 & 2 & 2 & 2 & 2 & 2 & 2 \\ 
4 & 5 & 5 & 5 & 5 & 5 & 5 \\ 
5 & 5 & 5 & 5 & 5 & 5 & 5    
\end{array}
\]
\end{theorem}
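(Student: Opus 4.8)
The plan is to follow the template used repeatedly in this section: pin down the structure of the $2$-generated submagmas of a semigroup in $L_1$, then read the forbidden magmas off a short case analysis. Recall, as observed just before the statement, that a semigroup in $L_1$ lies in $U$ exactly when $xy=xx$ for all $x,y$; so a semigroup $M\in L_1$ fails to be in $U$ precisely when there exist $a,b\in M$ with $ab\ne aa$. The easy direction is immediate: each $K_i$ fails the law $xy=xx$ (indeed $1\cdot 0\ne 1\cdot 1$), hence lies outside $U$, so any magma containing a copy of some $K_i$ lies outside $U$ as well.

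For the converse, write $0_x:=xx$. Since \ref{L1} forces every product of three factors to equal the square of its leftmost factor, one sees at once that for any $x,y\in M$ the set $\{0_x,0_y,x,y,xy,yx\}$ is a subsemigroup, with Cayley table
\[
\begin{array}{c|cccccc}
\cdot & 0_x & 0_y & x & y & xy & yx \\
\hline
0_x & 0_x & 0_x & 0_x & 0_x & 0_x & 0_x \\
0_y & 0_y & 0_y & 0_y & 0_y & 0_y & 0_y \\
x   & 0_x & 0_x & 0_x & xy  & 0_x & 0_x \\
y   & 0_y & 0_y & yx & 0_y & 0_y & 0_y \\
xy  & 0_x & 0_x & 0_x & 0_x & 0_x & 0_x \\
yx  & 0_y & 0_y & 0_y & 0_y & 0_y & 0_y
\end{array}
\]
so in particular $\langle a,b\rangle$ always sits inside this $6$-element template.

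Now fix a witness $a,b$ with $ab\ne 0_a$ and set $S:=\langle a,b\rangle$. The next step is a batch of distinctness facts, each following from \ref{L1} in a line or two: $0_a,a,ab$ are pairwise distinct; $b\notin\{0_a,a,ab\}$; $b\ne 0_b$; $0_b\notin\{a,b,ab\}$; and $ba\notin\{a,b\}$. (For instance, $a=0_a$ would give $ab=0_a\cdot b=0_a$; $a=ab$ would give $ab\cdot b=ab=a$ whereas \ref{L1} gives $ab\cdot b=0_a$; $b=0_b$ would give $ab=a(bb)=aa=0_a$; and so on.) After this the only remaining freedom is whether $0_a=0_b$ and what value $ba$ takes, and one checks that $ba=0_a$ and $ba=ab$ each already force $0_a=0_b$. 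This leaves exactly five cases: $0_a=0_b$ with $ba=0_a$; $0_a=0_b$ with $ba=ab$; $0_a=0_b$ with $ba$ a new element; $0_a\ne 0_b$, which forces $ba=0_b$; and $0_a\ne 0_b$ with $ba$ a sixth element. Substituting these into the template and relabelling identifies $S$ with a copy of one of $K_1,\dots,K_5$, the five cases yielding magmas of sizes $4,4,5,5,6$ respectively; since $M$ contains $S$, it contains the corresponding $K_i$.

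I expect the only genuine work to be concentrated in the distinctness step and in the routine-but-fiddly verification that the five surviving configurations are \emph{exactly} the five displayed Cayley tables. Once the $6$-element template is available, everything downstream is mechanical; conversely, it was precisely the absence of such a template --- treating the situation as raw equational manipulation --- that made the original \textsc{Prover9}-derived proof unwieldy.
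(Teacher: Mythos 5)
Your proposal is essentially the paper's own proof: the same six-element generated submagma $S(a,b)=\{aa,bb,a,b,ab,ba\}$ with the same Cayley-table template, the same distinctness facts for $a$, $b$, $ab$, $aa$, and the same five-way case split --- your cases, organized by whether $bb=aa$ and by the value of $ba$, correspond exactly to the paper's cases ``no coincidences'', ``$aa=bb$ only'', ``$ba=bb$ only'', ``$aa=ba$'' and ``$ab=ba$'' (the last two forcing $aa=bb$), yielding $K_5$, $K_3$, $K_4$, $K_1$, $K_2$ respectively.

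One substantive point where your ``routine-but-fiddly verification'' would not go through as stated: in the case $ab=ba$ (hence $aa=bb$), the four-element submagma you obtain has $b\cdot(aa)=b\cdot(ab)=bb=aa$, i.e.\ row $0$ equal to $(2,3,2,2)$ under the labelling $b=0$, $a=1$, $aa=2$, $ab=3$. This agrees with the table displayed inside the paper's proof of that case, but \emph{not} with the Cayley table of $K_2$ printed in the theorem statement, whose row $0$ is $(2,3,3,3)$. The printed $K_2$ in fact fails $x\cdot yz=xx$ (take $x=0$ and $y=z=0$, so $yz=2$ and $0\cdot 2=3\neq 2=0\cdot 0$), so it does not lie in $L_1$ and hence can never occur as a submagma of a member of $L_1$; as printed, the avoidance condition for $K_2$ is vacuous and the characterization would fail. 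The table in the statement appears to be a typo, and your computation identifies the correct forbidden magma for that case; otherwise your argument is sound and complete at the same level of detail as the paper's.
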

\begin{proof}
Let $M$ be a semigroup in $L_1$. For $x,y\in M$, let $S(x,y) = \{y,x,xx,xy,yx,yy\}$. Whether or not the elements of $S(x,y)$ are distinct, $S(x,y)$ is a subsemigroup of
$M$ with Cayley table
\begin{equation}\label{Eqn:L1_U}
\begin{array}{c|cccccc}
\cdot & y & x & xx & xy & yx & yy \\
\hline
 y & yy & yx & yy & yy & yy & yy \\
 x & xy & xx & xx & xx & xx & xx \\
 x & xx & xx & xx & xx & xx & xx \\
xy & xx & xx & xx & xx & xx & xx \\
yx & yy & yy & yy & yy & yy & yy \\
yy & yy & yy & yy & yy & yy & yy
\end{array}
\end{equation}
The entries of \eqref{Eqn:L1_U} are easily verified using \ref{L1}.

If \ref{U} fails in $M$, then there exist $a,b\in M$ such that $ab\neq aa$. We now show that $a$ and $b$ do not coincide with any other element of $S(a,b)$.
Clearly $a\neq b$. If $b = uv$ for some $u,v\in M$, then $ab = auv = aa$ by \ref{L1}. If $a = au$ for some $u\in M$, then $ab = aub = aa$.
If $a = bu$ for some $u\in M$, than $ab = bub = bb$ by \ref{L1} and so $abw = bbw$ for all $w\in M$, that is, $aa = bb$; hence $ab = bb = aa$.
Each case contradicts the assumption that $ab\neq aa$. We may also conclude that $a$, $b$, $ab$, and $aa$ are all distinct.

What remains is to consider the different cases of elements of $S(a,b)$ coinciding. If no two elements coincide, then an examination of \eqref{Eqn:L1_U}
(with $x=a$, $y=b$) shows that $S(a,b)$ is a copy of $K_5$ with $b=0$, $a=1$, $aa=2$, $ab=3$, $ba=4$, $bb=5$.

If $aa=bb$ but no other elements coincide, then \eqref{Eqn:L1_U} becomes
\[
\begin{array}{c|ccccc}
\cdot & b & a & aa & ab & ba \\
\hline
 b & aa & ba & aa & aa & aa \\
 a & ab & aa & aa & aa & aa \\
aa & aa & aa & aa & aa & aa \\
ab & aa & aa & aa & aa & aa \\
ba & aa & aa & aa & aa & aa
\end{array}
\]
This is evidently a copy of $K_3$ with $b=0$, $a=1$, $aa=3$, $ab=4$, $aa=5$.

If $ba=bb$ but no other elements coincide, then \eqref{Eqn:L1_U} becomes
\[
\begin{array}{c|ccccc}
\cdot & b & a & aa & ab & ba \\
\hline
 b & ba & ba & ba & ba & ba \\
 a & ab & aa & aa & aa & aa \\
aa & aa & aa & aa & aa & aa \\
ab & aa & aa & aa & aa & aa \\
ba & ba & ba & ba & ba & ba
\end{array}
\]
This is evidently a copy of $K_4$ with $b=0$, $a=1$, $aa=3$, $ab=4$, $aa=5$.

If $aa=ba$, then $aau=bau$ for all $u\in M$, and thus $aa=bb$ by \ref{L1}. In this case, \eqref{Eqn:L1_U} reduces to
\[
\begin{array}{c|cccc}
\cdot & b & a & aa & ab \\
\hline
 b & aa & aa & aa & aa \\
 a & ab & aa & aa & aa \\
aa & aa & aa & aa & aa \\
ab & aa & aa & aa & aa
\end{array}
\]
This is evidently a copy of $K_1$ with  $b=0$, $a=1$, $aa=3$, $ab=4$.

If $ab=ba$, then $abu=bau$ for all $u\in M$, and thus $aa=bb$ by \ref{L1}. In this case , \eqref{Eqn:L1_U} reduces to
\[
\begin{array}{c|cccc}
\cdot & b & a & aa & ab \\
\hline
 b & aa & ab & aa & aa \\
 a & ab & aa & aa & aa \\
aa & aa & aa & aa & aa \\
ab & aa & aa & aa & aa
\end{array}
\]
This is evidently a copy of $K_2$ with  $b=0$, $a=1$, $aa=3$, $ab=4$.

This exhausts all possibilities. Thus if \ref{U} fails in $M$, then $M$ contains a copy of at least one of $K_1$--$K_5$.
Conversely, if $M$ contains a copy of at least one of $K_1$--$K_5$, then in all $5$ cases, $1\cdot 0 = 3\neq 2 = 1\cdot 1$,
and so \ref{U} fails.
\end{proof}

\subsection{$U$ characterized inside $L_2$}
Recall the relevant identities:
\begin{align*}
  xy\cdot z = xx\,, \quad  x\cdot yz = xy \tag*{(L$_2$)} \\
  xy = xz\,, \quad xy\cdot z = xy         \tag*{(U)}
\end{align*}
To show that a magma in $L_2$ is in $U$, it is sufficient to show $xy=xz$ for all $x,y,z$, which is equivalent to $xy=xx$ for all $x,y$.

\begin{theorem}\label{Thm:L2_U}
A magma belongs to $U$ if and only if it belongs to $L_2$ and avoids the magma $E$ with the following Cayley table
\[
\begin{array}{c|cccc}
E & 0 & 1 & 2 & 3 \\
\hline
0 & 2 & 3 & 2 & 2 \\
1 & 1 & 1 & 1 & 1 \\
2 & 2 & 2 & 2 & 2 \\
3 & 2 & 2 & 2 & 2
\end{array}
\]
\end{theorem}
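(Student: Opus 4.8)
The proof will follow the template of the preceding theorems. The plan is to attach to each pair $x,y$ of elements of a magma $M$ in $L_2$ the set $S(x,y)=\{x,\,yy,\,xx,\,xy\}$, and to show that $S(x,y)$ is always a submagma whose Cayley table is forced by \ref{L2}, and that it becomes a copy of $E$ precisely when its four listed elements are distinct. Note that $S(x,y)$ need not contain $y$, so it is in general a proper submagma of the submagma generated by $\{x,y\}$; this is why a single $4$-element forbidden structure suffices here, in contrast with the larger structures appearing in Theorem \ref{Thm:L1_U}.

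First I would compute all products of elements of $S(x,y)$ using the two identities of \ref{L2}. From $x\cdot yz=xy$ one gets $x\cdot xx=x\cdot xy=xx$ and $x\cdot yy=xy$; from $xy\cdot z=xx$ one gets that left multiplication by $xx$, by $xy$, and by $yy$ is constant, sending every element to $xx$, $xx$, and $yy$ respectively. Ordering rows and columns as $x,yy,xx,xy$, this yields
\[
\begin{array}{c|cccc}
\cdot & x & yy & xx & xy \\
\hline
x & xx & xy & xx & xx \\
yy & yy & yy & yy & yy \\
xx & xx & xx & xx & xx \\
xy & xx & xx & xx & xx
\end{array}
\]
and comparing with the Cayley table of $E$ shows that $x\mapsto 0$, $yy\mapsto 1$, $xx\mapsto 2$, $xy\mapsto 3$ is an isomorphism whenever $x$, $yy$, $xx$, $xy$ are pairwise distinct.

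Now suppose \ref{U} fails in $M$; as remarked before the theorem, this is equivalent to the existence of $a,b\in M$ with $ab\neq aa$. The heart of the argument, and the step I expect to require the most care (though each case is short), is to show that $a$, $aa$, $ab$, $bb$ are then pairwise distinct. One checks the five non-obvious coincidences in turn, each time deriving a contradiction from \ref{L2}: if $a=aa$ then $ab=(aa)b=aa$; if $a=ab$ then $aa=(ab)b=ab=a$, contradicting $a\neq aa$; if $a=bb$ then $ab=(bb)b=bb=a$, contradicting $a\neq ab$; if $aa=bb$ then $aa=a\cdot aa=a\cdot bb=ab$, contradicting $aa\neq ab$; and if $ab=bb$ then $aa=(ab)a=(bb)a=bb=ab$, again contradicting $aa\neq ab$. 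Since $aa\neq ab$ is the hypothesis, all six pairs are distinct, so $S(a,b)$ is a copy of $E$ and $M$ does not avoid $E$.

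For the converse it suffices to verify directly that $E$ satisfies the identities \ref{L2} but violates \ref{U}, since $0\cdot 1=3\neq 2=0\cdot 0$; because \ref{U} is equational, any magma containing a copy of $E$ must also violate it. Combining the two directions proves the theorem.
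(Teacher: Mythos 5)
Your proof is correct and follows essentially the same route as the paper's: attach to each pair $x,y$ a four-element set, compute its Cayley table forced by \ref{L2}, and show its elements are pairwise distinct exactly when $xy\neq xx$, with the same converse via the equational nature of \ref{U}. The one substantive difference is your choice $S(x,y)=\{x,yy,xx,xy\}$ where the paper writes $\{x,yx,xx,xy\}$; yours is in fact the right set, since the first identity of \ref{L2} gives $yx\cdot z=yy$, so the paper's set need not be closed under the operation (its second row, asserting $yx\cdot z=yx$, is not what \ref{L2} yields) --- evidently ``$yx$'' there is a typo for ``$yy$'', and your version silently repairs it.
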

\begin{proof}
Let $M$ be a magma in $L_2$. For $x,y\in M$, let $S(x,y) = \{x,yx,xx,xy\}$. Whether or not the elements of $S(x,y)$ are distinct, $S(x,y)$
is a submagma of $M$ with Cayley table
\begin{equation}\label{Eqn:E_L2U}
\begin{array}{c|cccc}
E & x & yx & xx & xy \\
\hline
 x & xx & xy & xx & xx \\
yx & yx & yx & yx & yx \\
xx & xx & xx & xx & xx \\
xy & xx & xx & xx & xx
\end{array}
\end{equation}
The entries in \eqref{Eqn:E_L2U} are easily verified using \ref{L2}.

If \ref{L2} fails in $M$, then there exist $a,b\in M$ such that $ab\neq aa$. We now show that $|S(a,b)|=4$. Indeed, if $a=ba$, then
$aa=a\cdot ba\byref{L2} ab$. If $a=au$ for $u\in \{a,b\}$, then $ab = au\cdot b\byref{L2} aa$. If $ba=au$ for $u\in \{a,b\}$, then
$a\cdot ba = a\cdot au$, and so by \ref{L2}, $ab=aa$. Each case contradicts $ab\neq aa$, and so $|S(a,b)|=4$ as claimed. In this
case, $S(a,b)$ is a copy of $E$.

Conversely, if $M$ contains a copy of $E$, then $0\cdot 0 = 2\neq 3 = 0\cdot 1$, and thus \ref{L2} fails.
\end{proof}

\subsection{$U$ characterized inside $L_3$}
Recall the relevant identities:
\begin{align*}
  xy\cdot z = xy\,, \quad x\cdot yz = xx\,. \tag*{(L$_3$)} \\
  xy = xz\,, \quad xy\cdot z = xy\,.        \tag*{(U)}
\end{align*}
To show that a magma in $L_3$ lies in $U$, it is sufficient to show $xy=xx$ for all $x,y$, which is equivalent to $xy=xz$ for all $x,y,z$.

\begin{theorem}\label{Thm:L3_U}
A magma belongs to $U$ if and only if it belongs to $L_3$ and avoids each of the magmas $H_1$--$H_9$ with the following
Cayley tables:
\[
\begin{array}{c|cccc}
H_1 & 0 & 1 & 2 & 3 \\
\hline
0 & 2 & 3 & 2 & 2 \\ 
1 & 2 & 2 & 2 & 2 \\ 
2 & 2 & 2 & 2 & 2 \\ 
3 & 3 & 3 & 3 & 3    
\end{array}
\qquad
\begin{array}{c|cccc}
H_2 & 0 & 1 & 2 & 3 \\
\hline
0 & 3 & 2 & 3 & 3 \\ 
1 & 3 & 2 & 2 & 2 \\ 
2 & 2 & 2 & 2 & 2 \\ 
3 & 3 & 3 & 3 & 3    
\end{array}
\]
\[
\begin{array}{c|cccc}
H_3 & 0 & 1 & 2 & 3 \\
\hline
0 & 3 & 2 & 3 & 3 \\ 
1 & 2 & 3 & 3 & 3 \\ 
2 & 2 & 2 & 2 & 2 \\ 
3 & 3 & 3 & 3 & 3    
\end{array}
\qquad
\begin{array}{c|cccc}
H_4 & 0 & 1 & 2 & 3 \\
\hline
0 & 2 & 2 & 2 & 2 \\ 
1 & 2 & 3 & 3 & 3 \\ 
2 & 2 & 2 & 2 & 2 \\ 
3 & 3 & 3 & 3 & 3    
\end{array}
\]
\[
\begin{array}{c|ccccc}
H_5 & 0 & 1 & 2 & 3 & 4 \\
\hline
0 & 4 & 2 & 4 & 4 & 4 \\ 
1 & 3 & 2 & 2 & 2 & 2 \\ 
2 & 2 & 2 & 2 & 2 & 2 \\ 
3 & 3 & 3 & 3 & 3 & 3 \\ 
4 & 4 & 4 & 4 & 4 & 4    
\end{array}
\qquad
\begin{array}{c|ccccc}
H_6 & 0 & 1 & 2 & 3 & 4 \\
\hline
0 & 3 & 2 & 2 & 2 & 2 \\ 
1 & 2 & 4 & 4 & 4 & 4 \\ 
2 & 2 & 2 & 2 & 2 & 2 \\ 
3 & 3 & 3 & 3 & 3 & 3 \\ 
4 & 4 & 4 & 4 & 4 & 4    
\end{array}
\]
\[
\begin{array}{c|ccccc}
H_7 & 0 & 1 & 2 & 3 & 4 \\
\hline
0 & 4 & 2 & 4 & 4 & 4 \\ 
1 & 3 & 4 & 4 & 4 & 4 \\ 
2 & 2 & 2 & 2 & 2 & 2 \\ 
3 & 3 & 3 & 3 & 3 & 3 \\ 
4 & 4 & 4 & 4 & 4 & 4    
\end{array}
\qquad
\begin{array}{c|ccccc}
H_8 & 0 & 1 & 2 & 3 & 4 \\
\hline
0 & 2 & 2 & 2 & 2 & 2 \\ 
1 & 3 & 4 & 4 & 4 & 4 \\ 
2 & 2 & 2 & 2 & 2 & 2 \\ 
3 & 3 & 3 & 3 & 3 & 3 \\ 
4 & 4 & 4 & 4 & 4 & 4    
\end{array}
\]
\[
\begin{array}{c|cccccc}
H_9 & 0 & 1 & 2 & 3 & 4 & 5 \\
\hline
0 & 4 & 2 & 4 & 4 & 4 & 4 \\ 
1 & 3 & 5 & 5 & 5 & 5 & 5 \\ 
2 & 2 & 2 & 2 & 2 & 2 & 2 \\ 
3 & 3 & 3 & 3 & 3 & 3 & 3 \\ 
4 & 4 & 4 & 4 & 4 & 4 & 4 \\ 
5 & 5 & 5 & 5 & 5 & 5 & 5    
\end{array}
\]
\end{theorem}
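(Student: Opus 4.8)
The plan is to exploit the fact, anticipated in the introduction, that in a magma $M$ satisfying \ref{L3} the submagma generated by two elements is tiny and completely transparent. First I would show that every term in two variables $x,y$ collapses: a term of the form $(pq)r$ equals $pq$ by $xy\cdot z=xy$, and a term of the form $p(qr)$ equals $pp$ by $x\cdot yz=xx$, so inducting on term length, every term in $x,y$ lies in the set $S(x,y)=\{x,y,xx,xy,yx,yy\}$. The same two identities show that $S(x,y)$ is closed, and pin down its multiplication table completely: whether or not the six displayed entries are distinct, $S(x,y)$ has Cayley table
\[
\begin{array}{c|cccccc}
\cdot & x & y & xx & xy & yx & yy \\
\hline
 x  & xx & xy & xx & xx & xx & xx \\
 y  & yx & yy & yy & yy & yy & yy \\
 xx & xx & xx & xx & xx & xx & xx \\
 xy & xy & xy & xy & xy & xy & xy \\
 yx & yx & yx & yx & yx & yx & yx \\
 yy & yy & yy & yy & yy & yy & yy
\end{array}
\]
so each of $xx,xy,yx,yy$ is left absorbing and only the rows of $x$ and $y$ carry any information.

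Since \ref{L3} already contains the identity $xy\cdot z=xy$, a magma in $L_3$ lies in $U$ exactly when it satisfies $xy=xx$; hence $M\notin U$ means there are $a,b\in M$ with $ab\neq aa$, and it suffices to show that $S(a,b)$ is then a copy of one of $H_1$--$H_9$. I would first check, using the collapse identities above, that $a,b,aa,ab$ are pairwise distinct: each of $a=b$, $a=aa$, $a=ab$, $b=aa$, $b=ab$ quickly forces $ab=aa$. Similarly $ba,bb\notin\{a,b\}$: $b=bb$ or $b=ba$ gives $ab=a(bb)=aa$ or $ab=a(ba)=aa$; $a=bb$ gives $ab=(bb)b=bb=a$, contradicting $a\neq ab$; and then $a=ba$ would give $bb=b(ba)=ba=a$, again impossible. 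So the only remaining freedom is whether $ba$ equals $aa$, equals $ab$, or is a new element, and whether $bb$ equals $aa$, $ab$, $ba$, or is a new element.

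Finally I would run through this short list of possibilities, substitute each set of identifications into the $6\times 6$ table above, and match the collapsed table (up to renaming of elements) with one of $H_1$--$H_9$: the cases with four distinct elements yield $H_1$--$H_4$, those with five yield $H_5$--$H_8$, and the all-distinct case yields $H_9$. For the converse one reads off each Cayley table directly that $H_1$--$H_9$ all satisfy the \ref{L3} identities while failing $xy=xx$, so that no magma in $L_3$ containing a copy of some $H_i$ can lie in $U$. The routine but delicate part, and the real obstacle, is the bookkeeping in this last step: verifying that the case list is exhaustive, that the combinatorial cases have been correctly sorted into isomorphism types, and that each surviving table really is a relabeling of the advertised $H_i$.
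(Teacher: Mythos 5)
Your proposal is correct and follows essentially the same route as the paper: the two-generated submagma $S(x,y)=\{x,y,xx,xy,yx,yy\}$ with its universal Cayley table forced by \ref{L3}, the verification that $a$ and $b$ coincide with no product once $ab\neq aa$, and the case analysis on coincidences among $aa,ab,ba,bb$. The only difference is that you leave the final matching of the collapsed tables with $H_1$--$H_9$ as flagged bookkeeping, which is exactly what the paper's proof writes out case by case, and your tally (four $4$-element cases giving $H_1$--$H_4$, five $5$-element cases giving the four types $H_5$--$H_8$, one $6$-element case giving $H_9$) agrees with it.
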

\begin{proof}
Let $M$ be a magma in $L_3$.  For $x,y\in M$, let $S(x,y) = \{x,y,xy,yx,xx,yy\}$. Whether or not the elements of $S(x,y)$ are distinct, $S(x,y)$ is a submagma of
$M$ with Cayley table
\begin{equation}\label{Eqn:L3_U}
\begin{array}{c|cccccc}
\cdot & x & y & xy & yx & xx & yy \\
\hline
 x & xx & xy & xx & xx & xx & xx \\
 y & yx & yy & yy & yy & yy & yy \\
xy & xy & xy & xy & xy & xy & xy \\
yx & yx & yx & yx & yx & yx & yx \\
xx & xx & xx & xx & xx & xx & xx \\
yy & yy & yy & yy & yy & yy & yy
\end{array}
\end{equation}
The entries of \eqref{Eqn:L3_U} are easily verified using \ref{L3}.

If \ref{U} fails in $M$, then there exist $a,b\in M$ such that $ab\neq aa$. We now show that $a$ and $b$ do not coincide with any other element of $S(a,b)$.
Clearly $a\neq b$. If $a = uv$ for some $u,v\in M$, then $ab = uv\cdot b = uv = uv\cdot a = aa$ by \ref{L3}. If $b = uv$ for some $u,v\in M$, then 
$ab = a\cdot uv = aa$ by \ref{L3}. Each case contradicts the assumption that $ab\neq aa$. We may also conclude that $a$, $b$, $ab$, and $aa$ are all distinct.

What remains is to consider the different cases of elements of $S(a,b)$ coinciding.

If no two elements coincide, then an examination of \eqref{Eqn:L3_U}
(with $x=a$, $y=b$) shows that $S(a,b)$ is a copy of $H_9$ with $a=0$, $b=1$, $ab=2$, $ba=3$, $aa=4$, $bb=5$.

If $ab=ba$ and all other elements are distinct, then \eqref{Eqn:L3_U} becomes
\[
\begin{array}{c|ccccc}
\cdot & a & b & ab & aa & bb \\
\hline
 a & aa & ab & aa & aa & aa \\
 b & ab & bb & bb & bb & bb \\
ab & ab & ab & ab & ab & ab \\
aa & aa & aa & aa & aa & aa \\
bb & bb & bb & bb & bb & bb
\end{array}
\]
This is evidently a copy of $H_6$ with $a=0$, $b=1$, $ab=2$, $aa=3$, $bb=4$. 

If $ab=bb$ and all other elements are distinct, then \eqref{Eqn:L3_U} becomes
\[
\begin{array}{c|ccccc}
\cdot & a & b & ab & ba & aa \\
\hline
 a & aa & ab & aa & aa & aa \\
 b & ba & bb & ab & ab & ab \\
ab & ab & ab & ab & ab & ab \\
ba & ba & ba & ba & ba & ba \\
aa & aa & aa & aa & aa & aa
\end{array}
\]
This is evidently a copy of $H_5$ with $a=0$, $b=1$, $ab=2$, $ba=3$, $aa=4$. 

If $aa=ba$ and all other elements are distinct, then it turns out that $S(a,b)$ is another copy of $H_5$, this time with
$b=0$, $a=1$, $aa=2$, $ab=3$, $bb=4$.

If $ba=bb$ and all other elements are distinct, then after permuting rows and columns, \eqref{Eqn:L3_U} becomes
\[
\begin{array}{c|ccccc}
\cdot & b & a & ba & ab & aa \\
\hline
 b & ba & ba & ba & ba & ba \\
 a & ab & aa & aa & aa & aa \\
ba & ba & ba & ba & ba & ba \\
ab & ab & ab & ab & ab & ab \\
aa & aa & aa & aa & aa & aa
\end{array}
\]
This is evidently a copy of $H_8$ with $b=0$, $a=1$, $ba=2$, $ab=3$, $aa=4$. 

If $aa=bb$ and all other elements are distinct, then \eqref{Eqn:L3_U} becomes
\[
\begin{array}{c|ccccc}
\cdot & a & b & ab & ba & aa \\
\hline
 a & aa & ab & aa & aa & aa \\
 b & ba & aa & aa & aa & aa \\
ab & ab & ab & ab & ab & ab \\
ba & ba & ba & ba & ba & ba \\
aa & aa & aa & aa & aa & aa
\end{array}
\]
This is evidently a copy of $H_7$ with $a=0$, $b=1$, $ab=2$, $ba=3$, $aa=4$. 

If $aa=ba=bb$ and all other elements are distinct, then after permuting rows and columns, \eqref{Eqn:L3_U} becomes
\[
\begin{array}{c|cccc}
\cdot & a & b & aa & ab \\
\hline
 a & aa & ab & aa & aa  \\
 b & aa & aa & aa & aa  \\
aa & aa & aa & aa & aa  \\
ab & ab & ab & ab & ab  
\end{array}
\]
This is evidently a copy of $H_1$ with $a=0$, $b=1$, $aa=2$, $ab=3$.

If $ab=ba=bb$ and all other elements are distinct, then after permuting rows and columns, \eqref{Eqn:L3_U} becomes
\[
\begin{array}{c|cccc}
\cdot & b & a & ab & aa \\
\hline
 b & ab & ab & ab & ab  \\
 a & ab & aa & aa & aa  \\
ab & ab & ab & ab & ab  \\
aa & aa & aa & aa & aa
\end{array}
\]
This is evidently a copy of $H_4$ with $a=0$, $b=1$, $aa=2$, $ab=3$. 

If $ab=ba$, $aa=bb$ and all other elements are distinct, then \eqref{Eqn:L3_U} becomes
\[
\begin{array}{c|cccc}
\cdot & a & b & ab & aa \\
\hline
 a & aa & ab & aa & aa  \\
 b & ab & aa & aa & aa  \\
ab & ab & ab & ab & ab  \\
aa & aa & aa & aa & aa
\end{array}
\]
This is evidently a copy of $H_3$ with $a=0$, $b=1$, $ab=2$, $aa=3$.

If $ab=bb$, $aa=ba$ and all other elements are distinct, then \eqref{Eqn:L3_U} becomes
\[
\begin{array}{c|cccc}
\cdot & a & b & ab & aa \\
\hline
 a & aa & ab & aa & aa  \\
 b & aa & ab & ab & ab  \\
ab & ab & ab & ab & ab  \\
aa & aa & aa & aa & aa
\end{array}
\]
This is evidently a copy of $H_2$ with $a=0$, $b=1$, $ab=2$, $aa=3$.

This exhausts all possibilities. Thus if \ref{U} fails in $M$, then $M$ contains a copy of at least one of $H_1$--$H_9$.
Conversely, if $M$ contains a copy of at least one of $H_1$--$H_5$, then in all $9$ cases, $1\cdot 0\neq  1\cdot 1$,
and so \ref{U} fails.
\end{proof}

\subsection{$U$ characterized inside $L_4$}
Recall the relevant identities:
\begin{align*}
  xy\cdot z = xz\,,\quad  x\cdot yz = xx \tag*{(L$_4$)} \\
  xy = xz\,,\quad xy\cdot z = xy         \tag*{(U)}
\end{align*}

\begin{theorem}\label{Thm:L4_U}
A magma in $L_4$ belongs to $U$ if and only if it avoids the magma $D$ with Cayley table
\[
\begin{array}{c|cccc}
D & 0 & 1 & 2 & 3 \\
\hline
0 & 0 & 2 & 0 & 0 \\
1 & 3 & 3 & 3 & 3 \\
2 & 0 & 2 & 0 & 0 \\
3 & 3 & 3 & 3 & 3
\end{array}
\]
\end{theorem}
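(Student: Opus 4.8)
The plan is to mimic the template of the preceding $U$\nobreakdash-characterizations (Theorems~\ref{Thm:L2_U} and~\ref{Thm:L3_U}): exhibit a four-element submagma that is a copy of $D$ precisely when it is nondegenerate, and show that nondegeneracy corresponds exactly to the failure of $U$. As in the cases of $L_2$ and $L_3$, to show that a magma in $L_4$ lies in $U$ it suffices to show $xy = xx$ for all $x,y$ (from which $xy\cdot z = xz = xx = xy$ follows using \ref{L4}), so it is enough to analyze the failure of that identity. I would first record two consequences of the $L_4$ identities $uv\cdot w = uw$ and $u\cdot vw = uu$: (i) if $p\in M$ is a product, then $cp = cc$ for every $c\in M$; and (ii) if $p = uv$ is a product exhibited with left factor $u$, then $pw = uw$ for every $w\in M$. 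Then I would define, for $x,y\in M$, the set $S(x,y) = \{xx,\,y,\,xy,\,yy\}$.

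Using (i) and (ii) one checks that, whether or not its elements are distinct, $S(x,y)$ is a submagma of $M$ with Cayley table
\[
\begin{array}{c|cccc}
\cdot & xx & y & xy & yy \\
\hline
xx & xx & xy & xx & xx \\
y  & yy & yy & yy & yy \\
xy & xx & xy & xx & xx \\
yy & yy & yy & yy & yy
\end{array}
\]
the rows of $y$ and $yy$ being filled in by (i) together with $y\cdot y = yy$, and the rows of $xx$ and $xy$ by (ii) (which reduces a product on the left to $x\cdot(-)$) followed by (i). Comparison with the Cayley table of $D$ shows that whenever $xx$, $y$, $xy$, $yy$ are pairwise distinct, $S(x,y)$ is a copy of $D$ via $xx\mapsto 0$, $y\mapsto 1$, $xy\mapsto 2$, $yy\mapsto 3$. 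For the easy direction of the theorem one notes directly that $D$ itself belongs to $L_4$ and that \ref{U} fails in $D$ because $0\cdot 0 = 0\neq 2 = 0\cdot 1$; hence any magma in $L_4$ containing a copy of $D$ is not in $U$.

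For the other direction, suppose $U$ fails, so there are $a,b\in M$ with $ab\neq aa$; the goal is to show that the four elements $aa$, $b$, $ab$, $bb$ are pairwise distinct, for then $S(a,b)\cong D$. Four of the six required inequalities are immediate: $aa\neq ab$ by hypothesis, and if $b$ equalled any of $aa$, $ab$, $bb$ then $b$ would be a product, whence $ab = aa$ by (i), a contradiction. The remaining cases $aa = bb$ and $ab = bb$ are where the real work lies, and I expect these to be the main obstacle. If $aa = bb$, then since $aa = a\cdot a$ and $bb = b\cdot b$, applying (ii) to each shows $aw = (aa)w = (bb)w = bw$ for all $w$, and taking $w = b$ gives $ab = bb = aa$, a contradiction. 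If $ab = bb$, then (ii) applied to $ab = a\cdot b$ and $bb = b\cdot b$ again gives $aw = bw$ for all $w$; in particular $a\cdot(ab) = b\cdot(ab)$, and since $ab$ is a product, (i) turns this into $aa = bb$, which is impossible by the previous case. This exhausts all the ways elements of $\{aa,b,ab,bb\}$ could coincide, so $S(a,b)$ is a copy of $D$ and the proof is complete.
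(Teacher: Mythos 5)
Your proposal is correct and follows essentially the same route as the paper: the same four-element submagma $S(x,y)=\{xx,y,xy,yy\}$ with the same Cayley table, followed by a check that all six coincidences among $aa$, $b$, $ab$, $bb$ force $ab=aa$. Your packaging of the $L_4$ identities as facts (i) and (ii) lets you dispatch the three ``$b$ is a product'' cases at once and handle $aa=bb$ and $ab=bb$ a bit more cleanly than the paper's detour through $aa=ba$, but the underlying argument is the same.
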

\begin{proof}
Let $M$ be a magma in $L_4$. For all $x,y\in M$, let $S(x,y) = \{xx,y,xy,xy\}$. Whether the elements of $S(x,y)$ are
distinct or not, $S(x,y)$ is a submagma of $M$ with Cayley table
\begin{equation}\label{Eqn:inner-D}
\begin{array}{c|cccc}
\cdot & xx & y & xy & yy \\
\hline
xx & xx & xy & xx & xx \\
y  & yy & yy & yy & yy \\
xy & xx & xy & xx & xx \\
yy & yy & yy & yy & yy
\end{array}
\end{equation}
Indeed, for any $z,u,v\in M$, $xz\cdot uv\byref{L4} x\cdot uv\byref{L4} xx$ and $xz\cdot y\byref{L4} xy$. This gives the first and third rows of \eqref{Eqn:inner-D}.
For the second row, $y\cdot uv\byref{L4} yy$ and for the fourth row, we have $yy\cdot uv\byref{L4}y\cdot uv\byref{L4} yy$ and $yy\cdot y\byref{L4} yy$.

Now if $U$ fails to hold in $M$, then for some $a,b\in M$, $ab\neq aa$. We now show that $|S(a,b)|=4$. Indeed, if $aa=b$, then $ab = a\cdot aa\byref{L4} aa$. If
$aa=bb$, then $aa\cdot a= bb\cdot a$, which by \ref{L4} implies $aa=ba$; this, in turn, implies $aa\cdot b = ba\cdot b$, which by \ref{L4} implies
$ab=bb=aa$. If $b=ab$, then $ab=a\cdot ab\byref{L4} aa$. If $b=bb$, then $ab=a\cdot bb\byref{L4} aa$. Finally, if $ab=bb$, then $ab\cdot a=bb\cdot a$, which by \ref{L4}
implies $aa=ba$, a case we have already considered. Thus all elements of $S(a,b)$ are distinct.

Thus if $U$ fails to hold in $M$, $S(a,b)$ is a copy of $D$ (with $aa=0$, $b=1$, $ab=2$, and $bb=3$). Conversely, if $M$ contains a copy of $D$, then $0\cdot 1 = 2\neq 0
=0\cdot 0$, and thus $U$ fails in $M$.
\end{proof}

\subsection{$U$ characterized inside $L_5$}
Recall the relevant identities
\begin{align*}
  xy\cdot z = xy\,,\quad x\cdot yz = xy \tag*{(L$_5$)} \\
  xy = xz\,,\quad  xy\cdot z = xy       \tag*{(U)}
\end{align*}
Since $L_5$ is a variety of semigroups, we will use associativity freely. To show that a magma in $L_5$ belongs
to $U$, it is sufficient to show $xy=xz$ for all $x,y,z$, or equivalently, $xy=xx$ for all $x,y$.

\begin{theorem}\label{Thm:L5_U}
A magma in $L_5$ belongs to $U$ if and only if it avoids the magma $P$ with Cayley table
\[
\begin{array}{c|cccc}
P & 0 & 1 & 2 & 3 \\
\hline
0 & 2 & 3 & 2 & 2 \\
1 & 1 & 1 & 1 & 1 \\
2 & 2 & 2 & 2 & 2 \\
3 & 3 & 3 & 3 & 3
\end{array}
\]
\end{theorem}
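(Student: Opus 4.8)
The plan is to follow the same pattern used for the other ``$U$ inside $L_i$'' theorems, in particular the one for $L_2$ (Theorem~\ref{Thm:L2_U}), exploiting the fact that in $L_5$ \emph{every} product $uv$ is a left zero. Let $M$ be a semigroup in $L_5$ and, for $x,y\in M$, put $S(x,y)=\{x,yx,xx,xy\}$. The identity $uv\cdot w=uv$ makes each of $yx$, $xx$, $xy$ a left zero of $M$, so their rows in the Cayley table are constant, equal to $yx$, $xx$, $xy$ respectively; and the identity $u\cdot vw=uv$ gives $x\cdot x=xx$, $x\cdot yx=xy$, $x\cdot xx=xx$ and $x\cdot xy=xx$. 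Hence, whether or not the four listed elements are distinct, $S(x,y)$ is a subsemigroup of $M$, and when they \emph{are} distinct its Cayley table is exactly that of $P$ under the labelling $x=0$, $yx=1$, $xx=2$, $xy=3$. One should also check that $P$ itself lies in $L_5$ --- immediate from its table --- so the statement concerns genuine submagmas; note that the two $L_5$ identities together force $xy\cdot z = xy = x\cdot yz$, i.e.\ associativity, so every $L_5$-magma is automatically a semigroup.

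The heart of the proof is showing that when $U$ fails we really do land in the $4$-element case. Since $L_5$ already contains the identity $xy\cdot z=xy$, the failure of $U$ is equivalent to $xy\neq xx$ for some $x,y$; so fix $a,b\in M$ with $ab\neq aa$. I would then rule out each possible coincidence among $a,ba,aa,ab$, in every case deriving the forbidden conclusion $ab=aa$: for instance, $a=ba$ gives $ab=(ba)b=ba=a$ and $aa=(ba)a=ba=a$; $a=aa$ gives $ab=(aa)b=aa$; $a=ab$ gives $aa=(ab)a=ab$; $aa=ba$ gives $aa=a(aa)=a(ba)=ab$ (using $u\cdot vw=uv$); and $ab=ba$ gives $aa=a(ab)=a(ba)=ab$. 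Thus $|S(a,b)|=4$ and $S(a,b)$ is a copy of $P$, so $M$ contains a copy of $P$.

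For the converse, note simply that in $P$ we have $0\cdot 0=2\neq 3=0\cdot 1$, so the identity $xy=xz$ --- and hence $U$ --- fails in any magma containing a copy of $P$. The only part requiring any care is the case analysis establishing the distinctness of $a,ba,aa,ab$; each case is a one- or two-step calculation with the two defining identities of $L_5$, so I do not expect any real obstacle, and this theorem should be one of the shorter ones.
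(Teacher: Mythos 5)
Your proof is correct and takes essentially the same route as the paper: the same four-element submagma $S(x,y)=\{x,yx,xx,xy\}$, the same labelling $x=0$, $yx=1$, $xx=2$, $xy=3$ identifying it with $P$, and the same five-case coincidence analysis reducing each degenerate case to the contradiction $ab=aa$. The only (harmless) addition is your explicit check that the two $L_5$ identities force associativity, which the paper simply asserts.
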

\begin{proof}
Let $M$ be a magma in $L_5$. For all $x,y\in M$, let $S(x,y)=\{x,yx,xx,xy\}$. Whether the elements of $S(x,y)$ are
distinct or not, $S(x,y)$ is a submagma of $M$ with Cayley table
\begin{equation}\label{Eqn:inner-P}
\begin{array}{c|cccc}
\cdot & x & yx & xx & xy \\
\hline
 x & xx & xy & xx & xx \\
yx & yx & yx & yx & yx \\
xx & xx & xx & xx & xx \\
xy & yy & yy & yy & yy
\end{array}
\end{equation}
Each of the entries of this table follows immediately from \ref{L5}.

If \ref{U} fails to hold in $M$, then for some $a,b\in M$, $ab\neq aa$. We now show that $|S(a,b)|=4$. Indeed, if $a=ba$, then $aa=aba\byref{L5}ab$.
If $a=aa$, then $ab=aab\byref{L5}aa$. If $a=ab$, then $aa=aba\byref{L5}ab$. If $ba=aa$, then $aba=aaa$ which by \ref{L5} implies $ab=aa$.
Finally, if $ba=ab$, then $aba=aab$ which by \ref{L5} implies $ab=aa$. Thus all elements of $S(a,b)$ are distinct.

Thus if $U$ fails to hold in $M$, $S(a,b)$ is a copy of $P$ (with $a=0$, $ba=1$, $aa=2$, $ab=3$). Conversely, if $M$ contains a copy of $P$, then
$0\cdot 1=3\neq 2=0\cdot 0$, and thus $U$ fails in $M$.
\end{proof}

\subsection{$U$ characterized inside $L_6$}
Recall the relevant identities:
\begin{align*}
  xy\cdot z = xz\,,\quad  x\cdot yz = xy \tag*{(L$_6$)} \\
  xy = xz\,,\quad xy\cdot z = xy         \tag*{(U)}
\end{align*}
To prove that a magma in $L_6$ lies in $U$, it is sufficient to show $xy=xx$ for all $x,y$, which implies
$xy=xz$ for all $x,y$, and then $xy\cdot z\byref{L6} xz = xy$.

\begin{theorem}\label{Thm:L6_U}
A magma in $L_6$ belongs to $U$ if and only if it avoids the magmas $M_1$ and $M_2$ with the following
Cayley tables:
\[
\begin{array}{c|ccc}
M_1 & 0 & 1 & 2 \\
\hline
0 & 0 & 2 & 0 \\
1 & 1 & 1 & 1 \\
2 & 0 & 2 & 0
\end{array}
\qquad
\begin{array}{c|cccc}
M_2 & 0 & 1 & 2 & 3 \\
\hline
0 & 0 & 2 & 0 & 2 \\
1 & 1 & 3 & 1 & 3 \\
2 & 0 & 2 & 0 & 2 \\
3 & 1 & 3 & 1 & 3
\end{array}
\]
\end{theorem}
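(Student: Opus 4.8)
Following the template of the preceding proofs, the plan is to attach to each pair $x,y$ in a magma $M$ in $L_6$ a small submagma $S(x,y)$ witnessing the failure of $U$; here I would take $S(x,y)=\{xx,xy,yx,yy\}$. The key structural observation, immediate from the first $L_6$ identity $xy\cdot z\byref{L6}xz$, is that each of $xx$ and $xy$ acts on the left exactly like $x$ (that is, $u\cdot z=x\cdot z$ for all $z$ whenever $u\in\{xx,xy\}$), and likewise each of $yx,yy$ acts like $y$; combining this with $x\cdot uv\byref{L6}xu$ one checks that $S(x,y)$ is a submagma with Cayley table
\[
\begin{array}{c|cccc}
\cdot & xx & xy & yx & yy \\
\hline
xx & xx & xx & xy & xy \\
xy & xx & xx & xy & xy \\
yx & yx & yx & yy & yy \\
yy & yx & yx & yy & yy
\end{array}
\]
each entry following from the two $L_6$ identities exactly as in the earlier proofs.

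Next, suppose $U$ fails in $M$. As noted before, it is enough to have $a,b$ with $ab\neq aa$, so fix such a pair. The crucial claim is that $\{aa,ab\}$ and $\{ba,bb\}$ are disjoint. Indeed, if some $u\in\{aa,ab\}$ coincided with some $v\in\{ba,bb\}$, then for every $z$ we would get $a\cdot z=u\cdot z=v\cdot z=b\cdot z$; taking $z=a$ yields $aa=ba$, and then $ab\byref{L6}a\cdot(ba)=a\cdot(aa)\byref{L6}aa$, contradicting $ab\neq aa$. Since $aa\neq ab$ as well, the submagma $S(a,b)$ has at least three elements, and exactly four unless $ba=bb$.

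Finally I would split into the two remaining cases. If $ba\neq bb$, then $S(a,b)$ consists of the four distinct elements $aa,ab,ba,bb$, and reading off the table above with $x=a,y=b$ under the relabelling $aa=0$, $ba=1$, $ab=2$, $bb=3$ identifies $S(a,b)$ with $M_2$. If $ba=bb$, then $S(a,b)=\{aa,ab,ba\}$ has exactly three elements, the table collapses, and the relabelling $aa=0$, $ba=1$, $ab=2$ identifies $S(a,b)$ with $M_1$. For the converse direction one checks that $M_1,M_2\in L_6$ --- most cleanly by observing that in each the product depends only on the blocks of its two arguments for the partition $\{0,2\}\mid\{1\}$ (resp.\ $\{0,2\}\mid\{1,3\}$) and that a product always lies in the block of its left argument, so that both $L_6$ identities reduce to trivial identities on the two-element set of blocks --- and that $U$ fails in each since $0\cdot 0\neq 0\cdot 1$. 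The only delicate point is the disjointness claim (and the bookkeeping that the collapsed three-element table really is $M_1$ and not some other magma); everything else is routine verification from the defining identities of $L_6$.
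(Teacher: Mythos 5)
Your proof is correct and follows essentially the same route as the paper's: the same four-element submagma $S(x,y)=\{xx,xy,yx,yy\}$, the same disjointness argument showing $\{aa,ab\}\cap\{ba,bb\}=\emptyset$ when $ab\neq aa$ (the paper gets the contradiction in one step from $a\cdot au=aa$ and $a\cdot bv=ab$, yours takes a slightly longer detour through $aa=ba$, but both are valid), and the same case split on $ba=bb$ yielding $M_1$ versus $M_2$. The only differences are cosmetic (column ordering, and your optional verification that $M_1,M_2\in L_6$).
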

\begin{proof}
  Let $M$ be a magma in $L_6$. For $x,y\in M$, let $S(x,y) = \{xx,yx,xy,yy\}$. Whether the elements of $S(x,y)$ are
distinct or not, $S(x,y)$ is a submagma of $M$ with Cayley table
\begin{equation}\label{Eqn:inner-Mi}
\begin{array}{c|cccc}
\cdot & xx & yx & xy & yy \\
\hline
xx & xx & xy & xx & xx \\
yx & yx & yy & yx & yy \\
xy & xx & xy & xx & xy \\
yy & yx & yy & yx & yy
\end{array}
\end{equation}
Each entry of \eqref{Eqn:inner-Mi} is easily checked using \ref{L6}.

If \ref{U} fails to hold in $M$, then for some $a,b\in M$, $ab\neq aa$. This turns out to imply $|S(a,b)|\geq 3$.
Indeed, if $au=bv$ for some $u,v\in \{a,b\}$, then $a\cdot au=a\cdot bv$, and so $aa=ab$ by \ref{L6}. Thus there
are two cases to consider.

If $ba\neq bb$, then $|S(a,b)| = 4$ and $S(a,b)$ is a copy of $M_2$ with $aa=0$, $ba=1$, $ab=2$, $bb=3$.

If $ba = bb$, then $|S(a,b)|=3$ and $S(a,b)$ is a copy of $M_1$ with $aa=0$, $ba=1$, $ab=2$:
\[
\begin{array}{c|ccc}
\cdot & aa & ba & ab \\
\hline
aa & aa & ab & aa \\
ba & ba & ba & ba \\
ab & aa & ab & aa
\end{array}
\]

Thus if $M$ does not belong to $U$, then $M$ contains a copy of $M_1$ or $M_2$. Conversely, if $M$ contains a copy of
$M_1$ or of $M_2$, then $0\cdot 1 = 2\neq 0=0\cdot 0$, and thus in either case, \ref{U} fails to hold.
\end{proof}

\subsection{$U$ characterized inside $L_7$}
Recall the relevant identities:
\begin{align*}
    xy\cdot z=xz,\quad x\cdot yz=xz  \tag*{(L$_7$)} \\
    xy=xz,\quad  xy\cdot z=xy        \tag*{(U)}
\end{align*}
Since $L_7$ is a variety of semigroups, we use associativity freely. To prove that a semigroup in $L_7$ lies in $U$, it is
sufficient to show $xy=xx$ for all $x,y$, for then $xy=xz$ for all $x,y,z$, and then $xyz\byref{L7} xz = xy$.

\begin{theorem}
A magma belongs to $U$ if and only if it belongs to $L_7$ and avoids the $2$-element right zero band $2_{RZ}$.
\end{theorem}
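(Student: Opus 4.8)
The plan is to follow the now-familiar template used throughout this section: exhibit a small ``test submagma'' $S(x,y)$ that is automatically closed under the operation in any semigroup from $L_7$, show that its Cayley table is fixed by the $L_7$ identities, and argue that it degenerates to a point exactly when \ref{U} holds at $(x,y)$. Concretely, I would take $S(x,y) = \{x, xy\}$. Using $L_7$ (and associativity) one checks $x\cdot x = xx$, $x\cdot xy \byref{L7} xy$, $xy\cdot x \byref{L7} xx$... wait — to get a right zero band I actually want the diagonal to be trivial; so the right choice is $S(x,y)=\{xx, xy\}$, where $xx\cdot xx \byref{L7} xx$, $xx\cdot xy \byref{L7} xy$ (since $xx\cdot xy = x\cdot xy = xy$ by \ref{L7}), $xy\cdot xx \byref{L7} xx$, and $xy\cdot xy \byref{L7} xy$. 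Hence $S(x,y)$ always has the $2_{RZ}$ Cayley table
\[
\begin{array}{c|cc}
\cdot & xx & xy \\
\hline
xx & xx & xy \\
xy & xx & xy
\end{array}
\]
whenever $|S(x,y)|=2$, and is a singleton otherwise.

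Next I would connect $|S(x,y)|$ to the failure of \ref{U}. Recall (as noted just before the theorem) that proving a semigroup in $L_7$ lies in $U$ reduces to proving $xy = xx$ for all $x,y$. So \ref{U} fails in $M$ iff there exist $a,b\in M$ with $ab \neq aa$, which is exactly $ab \neq aa$, i.e. $|S(a,b)| = 2$; in that case $S(a,b)$ is a copy of $2_{RZ}$. For the converse, if $M$ contains a copy of $2_{RZ}$, then $0\cdot 0 = 0 \neq 1 = 0\cdot 1$, so there are elements $u=v=0$, $w=1$ with $uu \neq uw$, and \ref{U} fails. This direction is immediate because $2_{RZ}$ visibly violates $xy=xx$.

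The only genuine subtlety — and the one place I expect to need a moment's care rather than routine verification — is confirming that $xx\cdot xy = xy$ and more generally that the diagonal entries collapse correctly: one must be sure that $L_7$'s two identities $xy\cdot z = xz$ and $x\cdot yz = xz$, together with associativity, really do force all four entries without any hidden case distinction on whether $xx$, $xy$ coincide. Since $L_7$-semigroups satisfy $uvw = uw$ for all $u,v,w$ (apply either identity), every product $pq$ with $p\in\{xx,xy\}$, $q\in\{xx,xy\}$ reduces: writing $p$ and $q$ each as a product of two elements beginning with $x$, we get $pq = (x\cdot\ast)(x\cdot\ast) = x\cdot(\ast)(x\ast) \byref{L7} x\cdot(x\ast)$... one double-checks this lands on $q$ when we track the last letter, giving the right-zero pattern uniformly. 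Once that is pinned down, the proof is a two-line assembly exactly parallel to the $RB$-inside-$L_7$ and $LZ$-inside-$RB$ cases already in the paper.
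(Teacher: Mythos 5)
Your proposal is correct and follows essentially the same route as the paper: the same test submagma $S(x,y)=\{xx,xy\}$, the same right-zero Cayley table verified from the $L_7$ identities, and the same reduction of the failure of $U$ to the existence of $a,b$ with $ab\neq aa$, which makes $S(a,b)$ a copy of $2_{RZ}$. The only difference is cosmetic: your final paragraph's general $uvw=uw$ computation is a slightly more uniform way of checking the four table entries, but it proves exactly what the paper checks directly.
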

\begin{proof}
Let $M$ be a magma in $L_7$. For $x,y\in M$, let $S(x,y) = \{xx,xy\}$. Whether the elements of $S(x,y)$ are
distinct or not, $S(x,y)$ is a submagma of $M$ and a right zero band, with Cayley table
\begin{equation}\label{Eqn:rzb_L7}
\begin{array}{c|cc}
\cdot & xx & xy \\
\hline
xx & xx & xy \\
xy & xx & xy
\end{array}
\end{equation}
The entries of \eqref{Eqn:rzb_L7} are easily confirmed with repeated use of \ref{L7}.

Now if $M$ does not belong to $U$, then there exist $a,b\in M$ such that $ab\neq aa$. In this case, $|S(a,b)|=2$
and $S(a,b)$ is a copy of $2_{RZ}$. Conversely, if $M$ contains a copy of $2_{RZ}$, then $0\cdot 1=1\neq 0 = 0\cdot 0$,
and so \ref{U} fails.
\end{proof}

\section{The computational tool}\label{tool}

The tool we used to produce these theorems can be found in \cite{site}. It produces inputs that then can be fed into ProverX \cite{yves,proverx} to get the proofs.

In general terms, the  algorithm in \cite{site} is the following: the input are  two classes of algebras  $A\subset B$; as the classes $A$ and $B$ are different, there exists one model $M$ in $B\setminus A$; let $\mathcal{F}:=\{M\}$ and ask the computer whether  
\begin{eqnarray}\label{1}
A\stackrel{?}{=}\llbracket B\mid \mathcal{F} \rrbracket\, 
\end{eqnarray}

One of the following holds: 
\begin{enumerate}
\item[(1)] there is a proof, and the user gets the desired theorem; 
\item[(2)] there is a counter-example; in this case we add it to $\mathcal{F}$ and ask (\ref{1}) again;
\item[(3)] there is neither a proof nor a counter-example.
\end{enumerate}
 
Occasionally, hundreds of models appear at step (2). In some cases, these models are well behaved (e.g., it is one per order, but they seem to increase without limit) and this possibly hints at an infinite family of forbidden substructures; in other cases, the hundreds of models look very different, possibly an hint that a forbidden substructure theorem is not the right approach.

On the other extreme, occasionally the tool will find a small set of  models, and then will be searching forever for a proof or counter-example. We take this as an hint that probably there is an infinite forbidden substructure. Right now the most promising way of trying to improve \cite{site} is to automatically  find ways of getting finite descriptions of these infinite models in such a way that we still can get a proof.

%
%

The version in \cite{site} is very user friendly; the code is also included in \cite{proverx} so that advanced users can control it using Python. 

The details of this tool (from a computer science point of view) will appear elsewhere. It does not seem necessary to guarantee here that the algorithms really work as the output is a proof that a mathematician can check (as happens  with the results in this paper). 

Whenever you use this tool, please cite it as that is important to help us get the funds needed to keep improving it. If you have any suggestions of improvements, please let us know.


\begin{thebibliography}{99}

\bibitem{yves}
J. Ara\'ujo, Michael Kinyon, Yves Robert,
Varieties of regular semigroups with uniquely defined inversion.
\textit{Port. Math.} \textbf{76} (2019), 205--228.

\bibitem{site}
J. Ara\'ujo, F. Maia Ferreira, M. Kinyon,
\url{http://fst.educa.pt}

\bibitem{Kaprinai}
B. Kaprinai, H. Machida, T. Waldhauser, Lazy groupoids,
\textit{Semigroup Forum}  \textbf{102} (2021), 160--183.

\bibitem{Machida}
H. Machida, I.G. Rosenberg,
Essentially minimal clones of rank 3 on a three-element set.
In: Drechsler, R., Wille, R. (eds.) 44th International Symposium on Multiple-Valued Logic (ISMVL 2014), pp. 97-102.
IEEE Computer Society, Los Alamitos, CA (2014)

\bibitem{Machida2}
H. Machida, T. Waldhauser,
Lazy clones and essentially minimal groupoids.
In: Gaudet, V.C., Dueck, G.W. (eds.) 45 th International Symposium on Multiple-Valued Logic (ISMVL 2015), pp. 199-204.
IEEE Computer Society, Los Alamitos, CA (2015)

\bibitem{Prover9} W. McCune,
Prover9 and Mace4,
\url{www.cs.unm.edu/~mccune/prover9/}.

\bibitem{proverx}
Y. Robert,
ProverX,
\url{www.proverx.com}

\end{thebibliography}
\end{document}